\theoremstyle{plain}
\newtheorem{theorem}{Theorem}[section]
\newtheorem{lemma}[theorem]{Lemma}
\newtheorem{corollary}[theorem]{Corollary}
\newtheorem{conjecture}[theorem]{Conjecture}
\newtheorem{example}[theorem]{Example}
\numberwithin{equation}{section}
\definecolor{emphcol}{rgb}{0.0, 0.5, 2.0}
\newcommand{\Z}{{\mathbb Z}}
\newcommand{\N}{{\mathbb N}}
\newcommand{\C}{{\mathbb C}}
\DeclareMathOperator{\GCD}{GCD}
\DeclareMathOperator{\PF}{PF}
\DeclareMathOperator{\md}{mod}
\DeclareMathOperator{\cat}{Cat}
\newcommand{\epf}[1]{\widehat{\mathrm{PF}}_{#1}} % parking functions + an extra coordinate
\newcommand{\bfx}{\mathbf{x}} % bold face lower case x
\newcommand{\area}[1]{\mathrm{area}(#1)} %area of parking functions
\begin{document}

\title[]{Some natural extensions of the parking space}
\author{Matja\v z Konvalinka}
\address{Department of Mathematics, University of Ljubljana \& Institute of Mathematics, Physics and Mechanics, Ljubljana, Slovenia}
\email{\href{mailto:matjaz.konvalinka@fmf.uni-lj.si}{matjaz.konvalinka@fmf.uni-lj.si}}
\author{Vasu Tewari}
\address{Department of Mathematics, University of Pennsylvania, Philadelphia, PA 19104, USA}
\thanks{The first author acknowledges the financial support from the Slovenian Research Agency (research core funding No. P1-0294).}
\email{\href{mailto:vvtewari@math.upenn.edu}{vvtewari@math.upenn.edu}}
%\subjclass[2010]{Primary 05E05, 20C08; Secondary 05A05, 05E10, 05E15, 06A07, 16T05, 20C30}
%\keywords{binary trees, Smirnov words, ascent-descent, e-positivity}

\begin{abstract}
We construct a  family of $S_n$ modules indexed by $c\in\{1,\dots,n\}$ with the property that upon restriction to $S_{n-1}$ they recover the classical parking function representation of Haiman.
The construction of these modules relies on an $S_n$-action on a set that is closely related to the set of parking functions.
We compute the characters of these modules and use the resulting description to classify them up to isomorphism.
In particular, we show that the number of isomorphism classes is equal to the number of divisors $d$ of $n$ satisfying $ d\neq 2 \: (\!\!\!\!\mod 4)$.
In the cases $c=n$ and $c=1$, we compute the number of orbits.
Based on empirical evidence, we conjecture that when $c=1$, our representation is $h$-positive and is in fact the (ungraded) extension of the parking function representation constructed by Berget and Rhoades.
\end{abstract}

\maketitle

\section{Introduction}

Parking functions were introduced by Konheim and Weiss \cite{KW68} in their investigation of hashing functions in computer science.
Since then, they, along with their various generalizations, have attracted plenty of attention and have proven to be a fertile source of interesting mathematics. This is reflected by their appearances in diverse areas such as hyperplane arrangements \cite{AL99,PH12,Maz17,St96}, representation theory \cite{PP94, ARR15},  polytopes \cite{SP02}, the sandpile model \cite{CLB03}, and the theory of Macdonald polynomials \cite{Hai94}.
The last of these areas provides the context for our work and we detail our motivation next.

\medskip

An integer sequence $(x_1,\ldots,x_n)$ is a \emph{parking function} if its weakly increasing rearrangement $(z_1,\ldots,z_n)$ satisfies $0 \leq z_i \leq i - 1$ for $i = 1,\ldots,n$.
This definition implies that rearranging the entries in one parking function results in another.
Haiman \cite{Hai94} was the first to study the $S_n$ action on the set of parking functions of length $n$.
We denote the resulting $S_n$-representation by $\rho_n$.
Two decades later, Berget-Rhoades \cite{BR14}
studied the following seemingly unrelated representation $\sigma_n$ of $S_n$.
Let $K_n$ denote the complete graph with vertex set $[n]\coloneqq \{1,\dots, n\}$.
Given a subgraph $G\subseteq K_n$, we attach to it the polynomial  $p(G) \coloneqq \prod_{ij \in E(G)} (x_i - x_j) \in \C[x_1,\ldots,x_n]$.
Here $E(G)$ refers to the set of edges of $G$ and we record those by listing the smaller number first.
Define $V_n$ to be the $\mathbb{C}$-linear span of $p(G)$ over all $G$ for which the complement $\overline G$ is a connected graph.
We remark here that $V_n$ first appears in the work of Postnikov and Shapiro \cite{Pos04}, where the graphs $G$ with the property that $\overline G$ is connected are called \emph{slim graphs}.
The natural action of $S_n$ on $\C[x_1,\ldots,x_n]$ that permutes variables gives an action on $V_n$ because relabeling vertices preserves connectedness.
Amongst various other interesting things, Berget and Rhoades \cite[Theorem 2]{BR14} establish the remarkable fact that the restriction of $\sigma_n$ to $S_{n-1}$ is isomorphic to $\rho_{n-1}$.
The question of extending symmetric group representations in general has also received attention; see \cite{Ma96,Sun01}.

\medskip

The primary goal of this article is to construct a family of permutation representations $\epf{n,c}$ of $S_n$ with easy-to-compute characters, which all also restrict to $\rho_{n-1}$.
Interestingly, the modular behavior of the sum of elements in a parking function (closely related to the area statistic on parking functions) plays a key role in our analysis, {and our arguments rely on some subtle number-theoretic considerations.}
The authors in fact believe that the representation $\epf{n,1}$ is isomorphic to the (ungraded) Berget-Rhoades representation mentioned above; see Conjecture \ref{conj1}.

\section{Background}
For any undefined terminology in the context of symmetric functions, we refer the reader to \cite{St99,Mac95}.
For $n\geq 1$, we denote by $\Z_n$ the set of integers modulo $n$.
Typically, representatives from residue classes modulo $n$ will be implicitly assumed to belong to $\{0,\dots,n-1\}$.
Throughout, $S_n$ denotes the symmetric group consisting of permutations of $[n]$.
We use both the cycle notation and the one-line notation for permutations depending on our needs.
If we use the latter, then we let $\pi_i$ denote the image of $i$ under the permutation $\pi$ for a positive integer $i$.

\subsection{Symmetric functions}
A \emph{partition} $\lambda = (\lambda_1, \dots, \lambda_{\ell})$ is a weakly decreasing sequence of positive integers. The $\lambda_i$'s' are  the \emph{parts} of $\lambda$, their sum its \emph{size}, and their number its \emph{length}, which is denoted by $\ell(\lambda)$.
If $\lambda$ has size $n$, then we denote this by $\lambda\vdash n$.
Furthermore, letting $m_i$ denote the multiplicity of the part $i$ in $\lambda$ for $i\geq 1$, we set $z_{\lambda}\coloneqq \prod_{i\geq 1}i^{m_i}m_i!$. The cycle type of a permutation $\pi$ is a partition that we denote $\lambda(\pi)$.

\medskip

We consider the following distinguished bases for the ring of symmetric functions $\Lambda$: the \emph{power sum} symmetric functions $\{p_{\lambda}\colon \lambda\vdash n\}$, the \emph{complete homogeneous} symmetric functions $\{h_{\lambda}\colon \lambda\vdash n\}$, and the \emph{Schur} symmetric functions $\{s_{\lambda}\colon \lambda\vdash n\}$.

\medskip

The representation theory of the symmetric group is intimately tied to $\Lambda$ and the connection is made explicit by the \emph{Frobenius characteristic}. Given a representation $\rho$ of $S_n$, denote the corresponding character by $\chi_{\rho}$. Then
\[
\mathrm{Frob}(\rho)= \frac{1}{n!} \sum_{\pi \in S_n} \chi_\rho(\pi) p_{\lambda(\pi)} =  \sum_{\lambda\vdash n}\chi_{\rho}(\lambda)\frac{p_{\lambda}}{z_{\lambda}}.
\]
Under $\mathrm{Frob}$, the irreducible representation of $S_n$ corresponding to the partition $\mu\vdash n$ gets mapped to the Schur function $s_{\mu}$.
As a special case, we have the equality $\sum_{\lambda \vdash n} z_\lambda^{-1} p_\lambda = h_n$.
We proceed to define parking functions and an associated representation whose study has substantially motivated algebraic combinatorics in the last two decades.

\subsection{Parking functions}
As mentioned earlier, an integer sequence $(x_1,\ldots,x_n)$ is a parking function if its weakly increasing rearrangement $(z_1,\ldots,z_n)$ satisfies $0 \leq z_i \leq i - 1$ for $i = 1,\ldots,n$.
We denote by $\PF_n$ the set of all parking functions of length $n$.
For example,
\begin{align*}
\PF_2 =& \{00,01,10\}, \\
\PF_3 =& \{000,001,010,100,002,020,200,011,\\
& 101,110,012,021,102,120,201,210 \},
\end{align*}
and the weakly increasing elements of $\PF_4$ are $0000$, $0001$, $0011$, $0111$, $0002$, $0012$, $0112$, $0022$, $0122$, $0003$, $0013$, $0113$, $0023$, and $0123$.
Observe that there are 14 such elements in $\PF_4$.
More generally, we have that the number of weakly increasing elements in $\PF_n$ is the $n$th Catalan number $\mathrm{Cat}_n\coloneqq \frac{1}{n+1}\binom{2n}{n}$.
In the preceding examples, we have omitted commas and parentheses in writing our parking functions for the sake of clarity, and we will do this throughout without explicit mention.

\medskip

It is well known that $|\PF_n| = (n+1)^{n-1}$.
One way to see this is through the following result present in \cite{FR74} (where it is attributed to H.~O.~Pollak)  that will also be crucial in the sequel.

\begin{theorem}[Pollak]
	The map $\PF_{n} \to \Z_{n+1}^{n-1}$, given by
	$$(x_1,\ldots,x_{n}) \mapsto (x_2-x_1,\ldots,x_{n}-x_{n-1}),$$
	where subtraction is performed modulo $n+1$, is a bijection.
	\label{thm:Pollak}
\end{theorem}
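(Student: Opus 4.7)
The plan is to exploit the cyclic interpretation of parking. Extend the putative map to all of $\Z_{n+1}^n$ via $\delta\colon (x_1,\ldots,x_n)\mapsto (x_2-x_1,\ldots,x_n-x_{n-1})$, and note that $\delta$ is a group homomorphism with kernel the diagonal subgroup $\Delta=\{(c,c,\ldots,c):c\in\Z_{n+1}\}$. Thus $\delta$ has fibers of constant size $n+1$, and it induces a bijection $\Z_{n+1}^n/\Delta\to \Z_{n+1}^{n-1}$. Since $|\PF_n|=(n+1)^{n-1}=|\Z_{n+1}^{n-1}|$ (a fact to be obtained as a byproduct), proving the theorem reduces to the key claim that every $\Delta$-orbit in $\Z_{n+1}^n$ contains exactly one element of $\PF_n$ (viewed inside $\Z_{n+1}^n$ via the convention $0\le x_i\le n-1$).

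To prove the key claim, I would set up Pollak's circular parking argument. Imagine $n+1$ parking spots arranged in a cycle and labeled by $\Z_{n+1}$. Given $(x_1,\ldots,x_n)\in\Z_{n+1}^n$, let car $i$ (in order $i=1,2,\ldots,n$) drive to spot $x_i$ and, if occupied, keep moving forward cyclically until it finds an empty spot. Since there are $n+1$ spots and only $n$ cars, this procedure always terminates, and it leaves a unique empty spot $e(x_1,\ldots,x_n)\in\Z_{n+1}$. The two features I need to verify are: (i) the classical characterization that $(x_1,\ldots,x_n)$ lies in $\PF_n$ (with entries in $\{0,\ldots,n-1\}$) if and only if $e(x_1,\ldots,x_n)=n$, i.e.\ no car ever wraps around; and (ii) the equivariance $e((x_1+c,\ldots,x_n+c))=e(x_1,\ldots,x_n)+c$ for every $c\in\Z_{n+1}$.

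Given these, each $\Delta$-orbit contains exactly one representative with empty spot equal to $n$, namely the unique parking function in that orbit, and the key claim follows. Composing with the bijection induced by $\delta$ yields the desired bijection $\PF_n\to\Z_{n+1}^{n-1}$.

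The main obstacle is the circular parking lemma itself, specifically the equivariance in (ii); this requires arguing that shifting all preferences by $c$ produces the same pattern of occupancies rotated by $c$. The cleanest way I know is to observe that the final occupied set depends only on the multiset of preferences via the greedy cyclic procedure, and that the procedure manifestly commutes with the rotation automorphism of $\Z_{n+1}$. Item (i) is standard: if no car wraps around, then the sorted sequence $(z_1,\ldots,z_n)$ satisfies $z_i\le i-1$ because spots $0,\ldots,i-1$ must accommodate at least $i$ preferences in the parking case; conversely, the parking inequalities force the greedy process to terminate without wrapping.
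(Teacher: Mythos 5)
Your proposal is correct. Note, however, that the paper does not actually prove Theorem~\ref{thm:Pollak}: it quotes the result from Foata--Riordan \cite{FR74}, where it is attributed to Pollak, so there is no in-paper argument to compare against. What you have written is precisely the classical Pollak proof that the cited reference contains: quotient $\Z_{n+1}^n$ by the diagonal via the difference homomorphism $\delta$, and show each coset meets $\PF_n$ exactly once using the circular parking process with $n+1$ spots, the characterization that $x\in\PF_n$ iff the empty spot is $n$, and the rotation equivariance $e(x+c\mathbf{1})=e(x)+c$. All three ingredients are sound; the equivariance in particular needs nothing more than the observation that the greedy procedure (run with the cars in their given order) commutes step by step with the rotation of $\Z_{n+1}$, so your appeal to multiset-independence of the occupied set, while true, is not even necessary. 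One could quibble that item (i) deserves a line confirming that a car with preference $n$ forces $e(x)\neq n$, but this is immediate. The only thing your write-up ``buys'' beyond the paper is self-containedness; the paper instead leans on the citation and only uses the consequence spelled out after the theorem statement (that each translate class $(y,y+\alpha_1,\ldots)$ contains a unique parking function), which is exactly your key claim.
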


\noindent Note that in particular Theorem~\ref{thm:Pollak} says that for an arbitrary sequence $(\alpha_1,\ldots,\alpha_{n-1}) \in \Z_{n+1}^{n-1}$, exactly one of the sequences $(y,y+\alpha_1,y+\alpha_1+\alpha_2,\ldots,y+\alpha_1+\cdots+\alpha_{n-1})$, $y \in \Z_{n+1}$, is in $\PF_{n}$.

\medskip

Recall the natural action $\rho_{n}$ of  $S_{n}$ on $\PF_{n}$ defined by
$$\pi \cdot (x_1,\ldots,x_{n}) = (x_{\pi_1},\ldots,x_{\pi_{n}}).$$
For a partition $\lambda = (\lambda_1,\ldots \lambda_{\ell}) \vdash n$, the number of fixed points of the action of the permutation with cycle decomposition $(1, \ldots, \lambda_1)(\lambda_1+1, \ldots, \lambda_1+\lambda_2) \cdots$ is equal to the number of sequences $(\alpha_1,\ldots,\alpha_{n-1}) \in \Z_{n+1}^{n-1}$ satisfying $\alpha_i = 0$ for $i \in [n-1] \setminus \{\lambda_1,\lambda_1+\lambda_2,\ldots,\lambda_1+\cdots+\lambda_{\ell-1}\}$.
It follows that the character $\chi_{\rho_{n}}$ of $\rho_{n}$ satisfies
$$\chi_{\rho_{n}}(\pi) = (n+1)^{\ell - 1},$$
where $\ell\coloneqq \ell(\lambda(\pi))$.

 %%%%%%%%%%%%%%%%%%%%%%%%%%%%%%%%%%%%

\section{Main results}\label{sec:main_res}
For $n \in \N$ and $1\leq c\leq n$, define the set
$$\epf{n,c} = \{(x_1,\dots, x_n)\in \Z_{n}^{n} \colon (x_1,\ldots,x_{n-1}) \in \PF_{n-1}, \: x_n = c-\sum_{1\leq i\leq n-1}x_i \: (\md n) \}.$$
In other words, given a parking function $(x_1,\ldots,x_{n-1})$, $x_n$ is uniquely determined by the constraint $\sum_{i=1}^n x_i = c \: (\md n)$.
For example, the reader may check that
\begin{align*}
\epf{3,1} = & \{001,010,100\}, \\
\epf{3,2} = & \{002,011,101\}, \\
\epf{3,3} = & \{000,012,102\}.
\end{align*}

\medskip

It is obvious that, for every $1\leq c\leq n$, the projection $(x_1,\ldots,x_n) \mapsto (x_1,\ldots,x_{n-1})$ is a bijection $\epf{n,c} \to \PF_{n-1}$.
In particular, we have $|\epf{n,c}| = n^{n-2}$.
Again, we can construct an action $\tau_{n,c}$ of  $S_n$ on $\epf{n,c}$.
Take $\pi \in S_n$ and $(x_1,\ldots,x_n) \in \epf{n,c}$.
Note that $(x_{\pi_1},\ldots,x_{\pi_{n-1}})$ is not necessarily in $\PF_{n-1}$, and therefore $(x_{\pi_1},\ldots,x_{\pi_n})$ is not necessarily in $\epf{n,c}$.
However, by Pollak's theorem, exactly one of the sequences $(y+x_{\pi_1},\ldots,y+x_{\pi_{n-1}})$ is in $\PF_{n-1}$, and therefore $(y+x_{\pi_1},\ldots,y+x_{\pi_{n}}) \in \epf{n,c}$.
This element is the action of $\pi$ on $(x_1,\ldots,x_n)$. For example, consider the action of $\pi=1432\in S_4$ on $0003\in \epf{4,3}$. Na\"ively permuting elements of the sequence $0003$ according to $\pi$ leads to $0300$. Note that $030\notin \PF_3$, but adding $1$ to each coordinate gives $101\in \PF_3$.
Thus $1432 \cdot 0003 = 1011$.

\medskip

The following is our first main result.
\begin{theorem}
  \label{thm:main_1}
  The map $\tau_{n,c}$ is an action of $S_n$ on $\epf{n,c}$ whose restriction to $S_{n-1}$ is isomorphic to $\rho_{n-1}$.
  Furthermore, the character $\chi_{n,c}\coloneqq\chi_{\tau_{n,c}}$ can be computed as follows.
  Choose a permutation $\pi \in S_n$ with cycle type $\lambda=(\lambda_1,\dots,\lambda_\ell)$, and write $d \coloneqq \GCD(\lambda_1,\ldots,\lambda_\ell)$. Then
  $$
  \chi_{n,c}(\pi)=\left\lbrace \begin{array}{ll}\frac{d^2n^{\ell-2}}{2} & d \text{ even,} \frac{n}{d} \text{ odd, and } d|2c \\
  d^2n^{\ell-2} & d \text{ even,} \frac{n}{d} \text{ even, and } d|c \\
  d^2n^{\ell-2} & d \text{ odd and }  d|c\\
  0 & \text{otherwise.}
  \end{array}\right.
  $$
\end{theorem}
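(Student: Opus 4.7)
The plan is to split the proof into (a) verifying that $\tau_{n,c}$ is a well-defined $S_n$-action whose restriction to $S_{n-1}$ realizes $\rho_{n-1}$, and (b) computing $\chi_{n,c}$ by a fixed-point count. For (a), I would introduce the larger set
\[
E \coloneqq \{(x_1,\ldots,x_n) \in \Z_n^n : \textstyle\sum_i x_i \equiv c \pmod n\},
\]
which carries two manifestly commuting actions: the standard $S_n$-action permuting coordinates, and the $\Z_n$-action by diagonal translation $x \mapsto x + t(1,\ldots,1)$. Pollak's Theorem~\ref{thm:Pollak}, applied to the first $n-1$ coordinates, realizes $\epf{n,c}\subset E$ as a complete set of orbit representatives for the $\Z_n$-action. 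Unpacking the shift that returns $(x_{\pi_1},\ldots,x_{\pi_{n-1}})$ to $\PF_{n-1}$ shows that, under the bijection $\epf{n,c}\cong E/\Z_n$, the quotient $S_n$-action is exactly $\tau_{n,c}$, giving well-definedness. The restriction claim is then immediate: for $\pi\in S_{n-1}$ (fixing $n$) the sequence $(x_{\pi_1},\ldots,x_{\pi_{n-1}})$ is already a parking function, so no shift is required, and the projection to the first $n-1$ coordinates is an $S_{n-1}$-equivariant bijection $\epf{n,c}\to\PF_{n-1}$ intertwining $\tau_{n,c}|_{S_{n-1}}$ with $\rho_{n-1}$.

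For the character, the quotient description gives
\[
\chi_{n,c}(\pi) \;=\; \tfrac{1}{n}\cdot\#\bigl\{(x,t)\in E\times\Z_n : x_{\pi_i}=x_i+t\text{ for all }i\bigr\}.
\]
On a cycle of length $\lambda_j$ the relation $x_{\pi_i}=x_i+t$ propagates to $\lambda_j t\equiv 0\pmod n$, so $t$ must satisfy $dt\equiv 0\pmod n$ (giving exactly $d$ admissible values), and $x$ is then parametrized by one base value $b_j\in\Z_n$ per cycle. The sum condition becomes the single linear congruence $\sum_j\lambda_j b_j\equiv c-tA\pmod n$, where $A\coloneqq\sum_j\binom{\lambda_j}{2}$. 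Since $(b_j)\mapsto\sum_j\lambda_j b_j$ is a surjective homomorphism $\Z_n^\ell\to d\Z_n$ with uniform fibers of size $dn^{\ell-1}$, this congruence has $dn^{\ell-1}$ solutions when $d\mid c-tA$ and none otherwise.

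The crux, and the main technical obstacle, is evaluating $A\pmod d$. Writing $\lambda_j=dm_j$ and expanding $\binom{\lambda_j}{2}=\lambda_j(\lambda_j-1)/2$: when $d$ is odd one gets $\binom{\lambda_j}{2}\equiv 0\pmod d$ directly, while for $d$ even a short computation yields $\binom{\lambda_j}{2}\equiv (d/2)\cdot[m_j\text{ is odd}]\pmod d$. Summing and using $\sum_j m_j=n/d$, the parity of $n/d$ governs the outcome: $A\equiv 0\pmod d$ unless $d$ is even with $n/d$ odd, in which case $A\equiv d/2\pmod d$. When $A\equiv 0$, the constraint $d\mid c-tA$ reduces to $d\mid c$ independently of $t$, so all $d$ admissible values of $t$ contribute $dn^{\ell-1}$ solutions each, yielding $d^2n^{\ell-2}$ fixed points after dividing by $n$. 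When $A\equiv d/2$, writing $t=k(n/d)$ and using $n/2\equiv d/2\pmod d$ (since $n/d$ is odd) turns the constraint into $k(d/2)\equiv c\pmod d$, which is solvable precisely when $d\mid 2c$ and then has exactly $d/2$ solutions for $k$, producing $\tfrac{d^2n^{\ell-2}}{2}$ fixed points. All other cases vanish, matching the four cases of the theorem.
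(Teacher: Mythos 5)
Your proposal is correct and follows essentially the same route as the paper: both reduce the fixed-point count to a single linear congruence $\sum_j \lambda_j b_j \equiv c - tA \pmod n$ resolved by Lemma~\ref{lem:modular_lemma}, and both hinge on the identical parity analysis of $A=\sum_j\binom{\lambda_j}{2}$ modulo $d$ (the paper phrases this as ``$2b/d$ and $n/d$ have the same parity'') leading to the same $d\mid c$ versus $d\mid 2c$ case split. Your only real departure is packaging $\epf{n,c}$ as the quotient of $E=\{x\in\Z_n^n:\sum_i x_i\equiv c\}$ by diagonal translation, which is a tidier bookkeeping of the paper's ``$d$ choices for $y$, tuples counted up to translation by $(1,\dots,1)$'' but changes nothing substantive.
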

As a corollary, we completely classify the representations $\tau_{n,c}$ up to isomorphism, and show in particular that the number of non-isomorphic representations is equal to the number of divisors of $n$ that are not $2 \: (\md 4)$.
We refer the reader to Section~\ref{sec:understanding_pfnc} for further details, in particular to Theorem~\ref{thm:iso_classes} and Corollary~\ref{cor:num_noniso_classes}.

\medskip

Subsequently we focus on the cases where $c$ equals $n$ (equivalently, $0$) and $1$, where the characters simplify a bit. In both cases we compute the multiplicity of the trivial representation in $\epf{n,c}$, or equivalently, the number of orbits  under $\tau_{n,c}$.
 As our second main result, we state below the character in the case $c=1$ as well as the number of orbits.

\begin{theorem} \label{thm2}
   The character $\chi_{n,1}$ can be computed as follows.
    Choose a permutation $\pi \in S_n$ with cycle type $\lambda=(\lambda_1,\dots,\lambda_\ell)$, and write $d \coloneqq \GCD(\lambda_1,\ldots,\lambda_\ell)$. Then
    $$\chi_{n,1}(\pi) = \begin{cases} n^{\ell-2} & d = 1 \\ 2n^{\ell-2} & d = 2, \: n = 2 \: (\md 4) \\ 0 & \text{otherwise.} \end{cases}.$$
  As a consequence, the number of orbits of the action $\tau_{n,1}$ is given by
 $$o_{n,1} = \frac{1}{n^2} \sum_{d | n}(-1)^{n+d} \mu(n/d) \binom{2d-1}d,$$
 where $\mu$ is the classical M{\"o}bius function.
\end{theorem}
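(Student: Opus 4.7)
My plan is first to derive the character formula as a specialization of Theorem~\ref{thm:main_1}, and then to extract the orbit count using the standard inner product with the trivial character together with Möbius inversion on the divisor lattice of $n$.

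For the character formula, I substitute $c=1$ into Theorem~\ref{thm:main_1}. The condition $d \mid c$ forces $d = 1$, and the third case ($d$ odd, $d \mid c$) yields $\chi_{n,1}(\pi) = n^{\ell-2}$ whenever $\GCD(\lambda) = 1$. The first case requires $d$ even with $d \mid 2c = 2$, hence $d = 2$; combined with $n/d$ odd, i.e., $n \equiv 2 \pmod 4$, it gives $\chi_{n,1}(\pi) = 2n^{\ell-2}$. No other case is consistent with $c = 1$, so the remaining contributions vanish.

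For the orbit count, the number of orbits equals the multiplicity of the trivial representation, so
\[
o_{n,1} = \sum_{\lambda \vdash n} \frac{\chi_{n,1}(\lambda)}{z_\lambda} = f_n(1) + 2\,\delta\, f_n(2),
\]
where $\delta = 1$ if $n \equiv 2 \pmod 4$ and $0$ otherwise, and $f_n(d) \coloneqq \sum_{\lambda \vdash n,\, \GCD(\lambda) = d} n^{\ell(\lambda)-2}/z_\lambda$. The key auxiliary identity is
\[
F_n(d) \coloneqq \sum_{\substack{\lambda \vdash n \\ d \mid \lambda_i\ \forall i}} \frac{n^{\ell(\lambda)-2}}{z_\lambda} = \frac{1}{n^2}\binom{2(n/d)-1}{n/d},
\]
which I would establish by reindexing $\lambda = d\mu$ with $\mu \vdash n/d$, using $z_\lambda = d^{\ell(\mu)} z_\mu$ and $\ell(\lambda) = \ell(\mu)$, and then invoking $\sum_{\mu \vdash m} k^{\ell(\mu)}/z_\mu = \binom{m+k-1}{m}$ (from $h_m = \sum_\mu p_\mu/z_\mu$ evaluated at $x_1 = \cdots = x_k = 1$) with $k = m = n/d$. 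Möbius inversion on the divisor lattice of $n$ then yields $f_n(d) = \sum_{e:\, d \mid e \mid n} \mu(e/d)\, F_n(e)$.

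The final step, which I expect to be the main obstacle, is verifying that $f_n(1) + 2\delta f_n(2)$ collapses to the single sign-adjusted expression $n^{-2}\sum_{d \mid n}(-1)^{n+d}\mu(n/d)\binom{2d-1}{d}$ via case analysis on $n \bmod 4$. For $n$ odd, all divisors are odd and the sign is uniformly $+1$, so the formula reduces to $f_n(1)$. For $n \equiv 0 \pmod 4$, any odd divisor $d$ of $n$ satisfies $4 \mid n/d$, forcing $\mu(n/d) = 0$; the surviving even-$d$ terms carry sign $+1$ and again match $f_n(1)$. The subtle case is $n \equiv 2 \pmod 4$: writing $n = 2m$ with $m$ odd and splitting divisors of $n$ into odd $d \mid m$ (with $(-1)^{n+d} = -1$) and even $d = 2k$ with $k \mid m$, the multiplicativity $\mu(2m/d) = -\mu(m/d)$ for odd $m/d$ converts the odd-$d$ contribution into an extra $-\sum_{d \mid m}\mu(m/d)\binom{2d-1}{d}$, which the additional $2f_n(2)$ term is precisely calibrated to absorb, producing exactly the claimed signed sum.
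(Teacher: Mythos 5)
Your proposal is correct and follows essentially the same route as the paper: specialize Theorem~\ref{thm:main_1} at $c=1$, interpret $o_{n,1}$ as the multiplicity of the trivial representation, evaluate $\sum_{\mu\vdash m} m^{\ell(\mu)}/z_\mu = h_m(1^m) = \binom{2m-1}{m}$, apply M\"obius inversion over the divisor lattice, and handle the $n \equiv 2 \pmod 4$ case by the sign flip $\mu(2m/d) = -\mu(m/d)$. The only cosmetic difference is bookkeeping (you invert $F_n(d)=\sum_{d\mid e\mid n}f_n(e)$ directly, while the paper first rescales partitions to reduce to $a_{n,d}=a_{n/d}$ and inverts $\sum_{d\mid n}d^2a_d=\binom{2n-1}{n}$); all steps check out.
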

\noindent Note that the sequence $(o_{n,1})_{n \in \N}$ starts with $1, 1, 1, 2, 5, 13, 35, 100, 300$ (see \cite[A131868]{oeis}).

\medskip

Recall from the introduction that understanding the Berget-Rhoades extension was our main motivation. In this context, we  offer the following conjecture to close this section.
\begin{conjecture} \label{conj1}
  The representation $\tau_{n,1}$ is isomorphic to $\sigma_n$. Furthermore, $\mathrm{Frob}(\tau_{n,1})$ expands positively in the basis of homogeneous symmetric functions, i.e., it is \emph{$h$-positive}.
\end{conjecture}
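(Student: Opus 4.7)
The conjecture has two separable parts: (i) the isomorphism $\tau_{n,1} \cong \sigma_n$, and (ii) the $h$-positivity of $\mathrm{Frob}(\tau_{n,1})$. My plan is to attack (i) via a character computation and (ii) via orbit analysis of the permutation action, treating them as independent problems (although a direct proof of (i) producing an explicit equivariant isomorphism would also settle (ii) modulo understanding the graded pieces of $V_n$).

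For (i), Theorem~\ref{thm2} already supplies $\chi_{n,1}$, so the task reduces to computing $\chi_{\sigma_n}(\pi)$ on each cycle type. Both representations have dimension $n^{n-2}$ and, by \cite{BR14} together with Theorem~\ref{thm:main_1}, both restrict to $\rho_{n-1}$ on $S_{n-1}$; hence the computation is genuinely new only for $\pi$ moving $n$. Since $V_n$ is spanned by $p(G) = \prod_{ij \in E(G)}(x_i - x_j)$ as $G$ ranges over slim graphs, we have $\pi \cdot p(G) = \varepsilon(\pi,G)\, p(\pi G)$ for a sign $\varepsilon$ depending on edge orientation, so the trace is in principle a signed count of $\pi$-stable slim graphs modulo the relations among the $p(G)$. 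I would first pass to a spanning-tree spanning set via the Postnikov--Shapiro framework (exploiting Cayley's $n^{n-2}$), then extract the trace by tracking signs on $\pi$-stable trees. The striking vanishing $\chi_{n,1}(\pi) = 0$ whenever $\gcd(\lambda(\pi)) > 2$ (outside the single exceptional case) should manifest as a sign cancellation over $\pi$-stable slim graphs.

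For (ii), since $\tau_{n,1}$ is the permutation representation on the finite $S_n$-set $\epf{n,1}$, we have
\[
\mathrm{Frob}(\tau_{n,1}) \;=\; \sum_{\mathcal{O}} \mathrm{Frob}\!\left(\mathrm{Ind}^{S_n}_{\mathrm{Stab}(x_\mathcal{O})} \mathbf{1}\right),
\]
where $\mathcal{O}$ ranges over $S_n$-orbits on $\epf{n,1}$ with representatives $x_\mathcal{O}$. It suffices that each $\mathrm{Stab}(x_\mathcal{O})$ be conjugate to a Young subgroup $S_{\mu_1} \times \cdots \times S_{\mu_k}$, since then the corresponding summand equals $h_\mu$. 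By the construction of $\tau_{n,1}$, the stabilizer of $(x_1,\dots,x_n)$ consists of those $\pi \in S_n$ for which there exists $y \in \Z_n$ with $y + x_{\pi_i} \equiv x_i \pmod n$ for all $i$; by Pollak's theorem $y$ is uniquely determined by $\pi$. Analyzing when such a $y$ exists in terms of the multiset structure of $x$ together with the modular constraint $\sum x_i \equiv 1 \pmod n$ should reveal that all these stabilizers are Young subgroups, with the number of non-Young obstructions controlled by the same modular arithmetic that drives Theorem~\ref{thm2}.

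The principal obstacle is (i): computing $\chi_{\sigma_n}$ directly from the polynomial definition of $V_n$ is delicate because the $p(G)$ are highly linearly dependent and admit no obviously $S_n$-equivariant basis. A more promising route may be to construct an explicit $S_n$-equivariant isomorphism $\tau_{n,1} \xrightarrow{\sim} \sigma_n$ through a bijection between $\epf{n,1}$ and a canonical spanning set of $V_n$, bypassing character comparison entirely. For (ii), the secondary obstacle is the possibility that some orbit stabilizer fails to be a Young subgroup; in that eventuality the $h$-positivity would have to be extracted from a finer $S_n$-invariant filtration of $\tau_{n,1}$ whose existence is not a priori evident.
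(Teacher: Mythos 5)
First, a point of framing: the statement you are addressing is stated in the paper as Conjecture~\ref{conj1} and is \emph{not} proved there; the authors only verify it computationally through $n=6$ and exhibit, in their final section, explicit equivariant correspondences with bases of $V_n$ for $n\le 5$. So there is no paper proof to compare against, and what you have written is a research plan rather than a proof: both halves stop at an acknowledged obstacle (``should manifest as a sign cancellation,'' ``should reveal that all these stabilizers are Young subgroups'') without carrying out the decisive step. Part (i) in particular contains no actual computation of $\chi_{\sigma_n}(\pi)$ for any $\pi$ moving $n$; the difficulty you name (the $p(G)$ are highly linearly dependent with no evident $S_n$-equivariant spanning set) is precisely the difficulty the paper itself identifies as the reason the conjecture is open.

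Second, the key sufficient condition in your part (ii) is false, so that half of the plan cannot be completed as stated. Take $n=6$ and $x=(0,3,0,3,2,5)\in\epf{6,1}$ (indeed $(0,3,0,3,2)\in\PF_5$ and the coordinate sum is $13\equiv 1 \pmod 6$). Its stabilizer under $\tau_{6,1}$ contains not only the Young subgroup $S_{\{1,3\}}\times S_{\{2,4\}}$ of order $4$ (the elements acting with shift $y=0$) but also four elements acting with shift $y=3$: any $\pi$ interchanging the value classes $\{1,3\}\leftrightarrow\{2,4\}$ and $\{5\}\leftrightarrow\{6\}$ satisfies $x_{\pi_i}=x_i-3 \pmod 6$ for all $i$, so by Pollak uniqueness the action chooses $y=3$ and fixes $x$; for example $(1\,2\,3\,4)(5\,6)$ fixes $x$. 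The stabilizer therefore has order $8$ and contains an element of order $4$, hence is isomorphic to the dihedral group of order $8$ and is not conjugate to a Young subgroup (the only Young subgroups of order $8$ in $S_6$ are of type $S_2\times S_2\times S_2$, which are elementary abelian). Consequently the orbit decomposition does not by itself yield an $h$-positive expression---summands $\mathrm{Frob}\bigl(\mathrm{Ind}_H^{S_n}\mathbf{1}\bigr)$ for non-Young $H$ need not be $h$-positive---and any proof of $h$-positivity must rely on cancellation across orbits or on the ``finer filtration'' you allude to but do not construct. In short, neither half of the conjecture is established by the proposal.
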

\noindent It is worth noting that from the original definition of $\sigma_n$ in terms of slim graphs, it is not straightforward to compute its character.
In this regard, assuming the validity of Conjecture~\ref{conj1}, one could say that $\tau_{n,1}$ is the computationally more amenable representation.
%%%%%%%%%%%%%%%%%%%%%%%%%%%%%%%%%%%%

\section{Characters and classification of the \texorpdfstring{$\epf{n,c}$}{}}
\label{sec:understanding_pfnc}
Before providing proofs to our main results stated earlier, we establish a useful lemma.
\begin{lemma} \label{lem:modular_lemma}
	For $a_1,\ldots,a_k,c \in \Z$, $m \in \N$ the number of tuples $(x_1,\dots,x_k)\in \{0,\dots,m-1\}^{k}$ that satisfy
	$$a_1 x_1 + \cdots + a_k x_k  = c \: (\md m)$$
	is equal to $d m^{k-1}$ if  $d | c$, and $0$ otherwise.
	Here  $d = \GCD(a_1,\ldots,a_k,m)$.
\end{lemma}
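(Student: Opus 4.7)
The plan is to prove this by induction on $k$, reducing the $k$-variable congruence to a one-variable congruence via a standard linear-congruence argument at each step.

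For the base case $k=1$, I would invoke the classical fact that the congruence $a_1 x_1 \equiv c \pmod{m}$ has exactly $\GCD(a_1,m)$ solutions in $\{0,\dots,m-1\}$ when $\GCD(a_1,m)\mid c$, and none otherwise. This matches the formula $d\,m^{0}=d$.

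For the inductive step, assuming the result for $k-1$ variables, I would fix the last coordinate $x_k$ and rewrite the congruence as
$$a_1 x_1 + \cdots + a_{k-1} x_{k-1} \equiv c - a_k x_k \pmod{m}.$$
Setting $d' \coloneqq \GCD(a_1,\dots,a_{k-1},m)$, the inductive hypothesis says this has $d'\, m^{k-2}$ solutions in $(x_1,\dots,x_{k-1})$ when $d'\mid c - a_k x_k$, and $0$ otherwise. So the total count is
$$d'\, m^{k-2}\cdot\#\bigl\{x_k\in\{0,\dots,m-1\}:a_k x_k\equiv c\pmod{d'}\bigr\}.$$
The key observation is that $d'\mid m$, so $\{0,\dots,m-1\}$ consists of $m/d'$ complete residue systems modulo $d'$. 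Applying the base case with modulus $d'$ (not $m$), each such residue system contains $\GCD(a_k,d')$ solutions when $\GCD(a_k,d')\mid c$, and none otherwise. Combining this with the identity $\GCD(a_k,d')=\GCD(a_1,\dots,a_k,m)=d$ yields a total of
$$d'\, m^{k-2}\cdot d\cdot\frac{m}{d'}=d\, m^{k-1}$$
when $d\mid c$, and $0$ otherwise, completing the induction.

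The main subtlety, and the one place where care is required, is the inductive step's reduction to a congruence modulo $d'$ rather than modulo $m$. One must observe that $d'$ divides $m$ in order to convert the count modulo $d'$ into a count in $\{0,\dots,m-1\}$, and one must correctly identify $\GCD(\GCD(a_1,\dots,a_{k-1},m),a_k)$ with $\GCD(a_1,\dots,a_k,m)$. Everything else is routine bookkeeping. An alternative, non-inductive proof using the orthogonality relation $\frac{1}{m}\sum_{t=0}^{m-1} e^{2\pi i t y/m}=\mathbf{1}[m\mid y]$ is also available: one would verify that the Fourier sum collapses to exactly $d$ nonzero terms, each contributing $m^{k-1} e^{-2\pi i s c/d}$ for $s=0,\dots,d-1$, whose sum is $d$ or $0$ according as $d\mid c$ or not; but the induction is more self-contained and likely the preferred write-up.
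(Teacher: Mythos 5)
Your induction is correct: the base case is the classical count for a single linear congruence, and the inductive step correctly reduces the condition on $x_k$ to a congruence modulo $d'=\GCD(a_1,\dots,a_{k-1},m)$, uses $d'\mid m$ to split $\{0,\dots,m-1\}$ into $m/d'$ complete residue systems modulo $d'$, and invokes associativity of the GCD to identify $\GCD(a_k,d')$ with $d$. The bookkeeping $d'm^{k-2}\cdot d\cdot\frac{m}{d'}=dm^{k-1}$ checks out. The paper takes a different and shorter route: it views $(x_1,\dots,x_k)\mapsto a_1x_1+\cdots+a_kx_k$ as a group homomorphism $\Z_m^k\to\Z_m$, notes via the extended Euclidean algorithm that its image is exactly the subgroup $\{y\in\Z_m: d\mid y\}$ of index $m/d$, and reads off the answer as the size of the kernel, $m^k/(m/d)=dm^{k-1}$, whenever $c$ lies in the image. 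The homomorphism argument handles all $k$ variables at once and makes the structural reason for the answer transparent (each attained value is hit equally often, being a coset of the kernel); your induction is more elementary and self-contained, at the cost of carrying the GCD through each step. Both are complete proofs, and your closing remark about the exponential-sum alternative is also a viable third route.
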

\begin{proof}
	Consider the homomorphism from the group $\mathbb{Z}_m^k$ to $\mathbb{Z}_m$ sending $(x_1,\dots,x_k)$ to $a_1x_1+\dots+a_kx_k \: (\md m)$.
	It follows from the extended Euclidean algorithm that the image is the set $\{y\in \Z_m \colon d|y\}$.
	Thus, we see that there exists a solution to the equation in question if $d|c$. Furthermore, if this is indeed the case, the number of solutions is equal to the cardinality of the kernel of our map, i.e., $\frac{m^k}{m/d}=dm^{k-1}$, thereby implying the claim.
\end{proof}

We are  ready to compute the character $\chi_{n,c}$ of the $S_n$ action on $\epf{n,c}$.
\subsection*{Proof of Theorem~\ref{thm:main_1}}
Since the maps $(x_1,\ldots,x_n) \mapsto (x_{\pi_1},\ldots,x_{\pi_n})$ and $(x_1,\ldots,x_n) \mapsto (y+x_1,\ldots,y+x_n)$ commute, we conclude that $\tau_{n,c}$ is an action.
It is also clear that the restriction of $\tau_n$ to $S_{n-1}$ is $\rho_{n-1}$.
It remains to compute the character $\chi_{n,c}$.

\medskip

Without loss of generality, assume that $\pi = (1, \ldots, \lambda_1)(\lambda_1+1, \ldots, \lambda_1+\lambda_2) \cdots$, and set $d \coloneqq \GCD(\lambda)$.
Also, following \cite[Equation 7.103]{St99}, define
\begin{align}
  b(\lambda')\coloneqq \sum_{i = 1}^\ell\binom{\lambda_i}{2}
\end{align}
where $\lambda'$ denotes the transpose of $\lambda$.
As $\lambda$ is fixed, we set $b\coloneqq b(\lambda')$ for convenience. We want to count the number of fixed points of $\pi$.

\medskip

Suppose that $\pi \cdot (x_1,\ldots,x_n) = (x_1,\ldots,x_n)$.
We have $\pi \cdot (x_1,\ldots,x_n) = (x_2+y,\ldots,x_{\lambda_1}+y,x_1+y,x_{\lambda_1+2}+y,\ldots,x_{\lambda_1+\lambda_2}+y,x_{\lambda_1+1}+y,\ldots )$ for some $y \in \Z_n$, so $x_1 = x_2 + y, x_2 = x_3 + y, \ldots, x_{\lambda_1-1} = x_{\lambda_1} + y, x_{\lambda_1} = x_1+y$, $x_{\lambda_1+1} = x_{\lambda_1+2}+y, x_{\lambda_1+2} = x_{\lambda_1+3}+y,\ldots x_{\lambda_1+\lambda_2-1} = x_{\lambda_1+\lambda_2}+y, x_{\lambda_1+\lambda_2} = x_{\lambda_1+1}+y$ etc.

\medskip

The equalities immediately imply that $\lambda_i \cdot y = 0 \: (\md n)$, and consequently $d \cdot y = 0 \: (\md n)$.
In other words, $y = k \cdot \frac n d$ for some $k \in \Z$, $0 \leq k < d$.
Furthermore, the sum of the coordinates of $\pi \cdot (x_1,\ldots,x_n)$ is, modulo $n$, equal to $c$, and therefore
\begin{align}
\lambda_1 x_{1} + \binom{\lambda_1} 2 y + \lambda_2 x_{\lambda_1+1} + \binom{\lambda_2} 2 y + \cdots + \lambda_\ell x_{\lambda_1+\cdots+\lambda_{\ell-1}+1} + \binom{\lambda_\ell} 2 y= c \: (\md n).
\end{align}
Set $f_1\coloneqq x_{1}$, $f_2\coloneqq x_{\lambda_1+1},\dots$, $f_{\ell}=x_{\lambda_1+\dots+\lambda_{\ell-1}+1 }$.
Then counting fixed points of $\pi$ is tantamount to counting tuples $(f_1,\dots,f_l)\in \Z_n^{\ell}$ (up to translation by $(1,\dots,1)\in\Z_n^{\ell}$)  that satisfy
\begin{align}\label{eqn:main_lin_diophantine}
  \sum_{i = 1}^\ell\lambda_if_i+yb = c \: (\md n).
\end{align}

\medskip

Assume first that $d$ is odd.
Then $d|\lambda_i$ implies $d|\binom{\lambda_i} 2$,  and therefore $d|b$.
It follows that
$$yb =   \frac{b}{d}\cdot k \cdot n = 0 \: (\md n),$$
 which in turn implies that \eqref{eqn:main_lin_diophantine} reduces to
\begin{align}
  \sum_{i = 1}^\ell\lambda_if_i = c \: (\md n).
\end{align}
 Using Lemma~\ref{lem:modular_lemma} and recalling that  we have $d$ choices for $y$, we infer that there are $d^2n^{\ell-2}$ (we have power of $\ell-2$ instead of $\ell-1$ because we look at tuples up to translation by $(1,\ldots,1)$, i.e.\, we can fix one of $f_i$'s to be, say, $0$) elements in $\epf{n,c}$ fixed by $\pi$ if $d|c$, and $0$ otherwise.

Now assume that $d$ (and consequently $n$) is even.
Then $\frac d 2 |\frac{\lambda_i}2$ and $\frac d 2 | \binom{\lambda_i}2$, thereby implying $d|2b$.
It follows that
\begin{align}\label{eqn:parity of 2b/d}
yb  = \frac{2b}{d} \cdot k  \cdot \frac n 2.
\end{align}
We are naturally led to consider two scenarios based on the parity of $2b/d$. First note that $n/d = \lambda_1/d + \cdots + \lambda_\ell/d$ is odd if and only if the number of odd numbers among $\lambda_1/d,\ldots,\lambda_\ell/d$ is odd. On the other hand $2b/d = \lambda_1(\lambda_1-1)/d + \cdots +  \lambda_\ell(\lambda_\ell-1)/d$, and $\lambda_1-1,\ldots,\lambda_\ell-1$ are all odd, so $2b/d$ is also odd if and only if the number of odd numbers among $\lambda_1/d,\ldots,\lambda_\ell/d$ is odd. In other words, $2b/d$ and $n/d$ have the same parity.

\medskip

Suppose that $2b/d$ and $n/d$ are even.
In view of the equality in \eqref{eqn:parity of 2b/d}, we may rewrite \eqref{eqn:main_lin_diophantine} as
\begin{align}
  \sum_{i = 1}^\ell\lambda_if_i =c \: (\md n).
\end{align}
Like before, we infer that $d^2n^{\ell-2}$ elements in $\epf{n,c}$ are fixed by $\pi$ if $d|c$, and $0$ otherwise.

Finally consider the case where $2b/d$ and $n/d$ are odd.
We need to count solutions to
\begin{align}
  \sum_{i = 1}^\ell\lambda_if_i= c+\frac{kn}{2} \: (\md n).
\end{align}
Note crucially that since $\frac{n}{d}$ is odd, it cannot be that $d$ divides both $c$ and $c+\frac{n}{2}$.
From the odd $k\in \{0,\dots,d-1\}$, we get a contribution of $\frac{d^2n^{\ell-2}}{2}$ if $d|(c+\frac{n}{2})$, and $0$ otherwise.
From the even $k\in \{0,\dots,d-1\}$, we get a contribution of $\frac{d^2n^{\ell-2}}{2}$ if $d|c$, and $0$ otherwise.
We leave it to the reader to check that in the case under consideration we have
\begin{align}
  d|c \text{ or } d|(c+\frac{n}{2}) \Leftrightarrow d|2c.
\end{align}
This concludes our proof.

\subsection{Number of non-isomorphic \texorpdfstring{$\widehat{\mathrm{PF}}_{n,c}$}{}}
Given a positive integer $n$, let $v_2(n)$ denote the $2$-adic valuation of $n$, i.e., the highest power of $2$ that divides $n$.
Define $D_n$ to be the following subset of the set of divisors of $n$:
\begin{align}
  D_n\coloneqq \{k|n :n/k= n \: (\md 2)\}.
\end{align}
For instance, we have $D_{12}=\{1,2,3,6\}$.
We will show that $D_n$ indexes the isomorphism classes of the representations $\tau_{n,c}$.
Prior to that we establish a straightforward lemma on the cardinality of $D_n$.

\medskip

\begin{lemma}
  \label{lem:num_D_n}
  The cardinality of $D_n$ equals the number of divisors of $n$ that are not $2$ modulo $4$.
\end{lemma}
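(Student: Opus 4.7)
The plan is a short parity case-split that exploits the standard involution $k \leftrightarrow n/k$ on the divisor set of $n$. The first step is to rewrite the membership condition for $D_n$: by definition, $k \in D_n$ iff $n/k$ has the same parity as $n$.

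If $n$ is odd, then every divisor of $n$ is odd, so $n/k$ is automatically odd and $D_n$ equals the full divisor set of $n$. On the right-hand side, no divisor of an odd integer is $\equiv 2 \pmod 4$, so both counts agree trivially.

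If $n$ is even, the condition on $k$ becomes ``$n/k$ is even.'' Applying the involution $k \leftrightarrow n/k$, one sees that $|D_n|$ equals the number of even divisors of $n$. Now write $n = 2^a m$ with $a \geq 1$ and $m$ odd; every divisor of $n$ has a unique form $2^i d$ with $0 \leq i \leq a$ and $d \mid m$. The even divisors are those with $i \geq 1$, giving $a$ times the number of divisors of $m$. On the other hand, divisors of $n$ congruent to $2 \pmod 4$ are precisely those with $i = 1$, contributing exactly the number of divisors of $m$; since the total number of divisors of $n$ equals $(a+1)$ times the number of divisors of $m$, the count of divisors not congruent to $2 \pmod 4$ is again $a$ times the number of divisors of $m$. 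The two counts match, completing the plan.

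There is no substantial obstacle here: the key conceptual step is recognizing that the divisor involution lets us replace the ``$n/k$ has the same parity as $n$'' condition with an ``even divisor of $n$'' condition in the nontrivial case, after which the remainder is elementary bookkeeping with the $2$-adic factorization.
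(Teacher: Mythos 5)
Your proof is correct and follows essentially the same route as the paper: both sides are counted via the $2$-adic factorization $n=2^a m$ and shown to equal $a\cdot d(m)$ in the even case (and $d(n)$ in the odd case), which matches the paper's formula $v_2(n)\cdot d\bigl(n/2^{v_2(n)}\bigr)$. Your use of the involution $k\leftrightarrow n/k$ to convert the condition ``$n/k$ even'' into ``$k$ even'' is a pleasant cosmetic simplification but not a substantively different argument.
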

\begin{proof}
Let $d(n)$ denote the number of divisors of $n$.
Then $|D_n|$ equals $d(n)$ if $v_2(n)=0$, and $v_2(n)\cdot d\left(\frac{n}{2^{v_2(n)}}\right)$ otherwise.
It is easily checked the number of divisors of $n$ that are not $2$ modulo $4$ satisfies the same recursion: such a divisor $d$ must satisfy $v_2(d)\neq 1$.
\end{proof}

\medskip

For $k\in D_n$, consider the set
 \begin{align}
  C_{n,k}\coloneqq \{ m\in [n]:  \GCD(n,m)\in \{k,2k\} \text{ if } \frac{n}{k}= 2 \: (\md 4) \text{ and } \GCD(n,m)=k \text{ otherwise}\}.
\end{align}
As an example, consider $n=12$, in which case we have
\begin{align*}
  C_{12,1}&=\{1,5,7,11\},\\
  C_{12,2}&=\{2,4,8,10\},\\
  C_{12,3}&=\{3,9\},\\
  C_{12,6}&=\{6,12\}.\\
\end{align*}
Note in particular that sets $C_{12,1}$, $C_{12,2}$, $C_{12,3}$, and $C_{12,6}$ form a partition of $[12]$. More generally, the following lemma holds.
\begin{lemma}
  We have that $\displaystyle\coprod_{k\in D_n} C_{n,k}=[n]$, where $\coprod$ denotes disjoint union.
  \end{lemma}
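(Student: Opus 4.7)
The plan is to show that for every $m\in[n]$ there exists a unique $k\in D_n$ with $m\in C_{n,k}$, by splitting on whether $g\coloneqq\GCD(n,m)$ itself lies in $D_n$. A preliminary reformulation is useful: $k\in D_n$ if and only if $k$ divides $n$ and $n/k$ has the same parity as $n$; so when $n$ is odd, every divisor of $n$ lies in $D_n$, and when $n$ is even, $D_n$ consists of divisors $k$ with $v_2(k)<v_2(n)$, equivalently with $n/k$ even.

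For existence I would argue as follows. If $g\in D_n$, then $\GCD(n,m)=g$ places $m\in C_{n,g}$ directly (either clause of the definition allows this). Otherwise $g\notin D_n$, meaning $n/g$ and $n$ have opposite parities; this forces $n$ even and $n/g$ odd, so $v_2(g)=v_2(n)\ge 1$, and then $k\coloneqq g/2$ is a divisor of $n$ with $n/k=2n/g$ even (hence $k\in D_n$) and $n/k\equiv 2\ (\md 4)$ since $n/g$ is odd. The ``$\GCD(n,m)\in\{k,2k\}$'' clause then places $m\in C_{n,k}$.

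For uniqueness, suppose $m\in C_{n,k}$ with $k\in D_n$. By the definition of $C_{n,k}$, either $g=k$, or $g=2k$ together with $n/k\equiv 2\ (\md 4)$. The first option forces $k=g$ and $g\in D_n$; the second forces $k=g/2$ and $n/g$ odd, and since $k\in D_n$ requires $n/k=2n/g$ to share parity with $n$, this forces $n$ even and hence $g\notin D_n$. The two cases are therefore mutually exclusive, and in each the value of $k$ is pinned down, yielding uniqueness. The entire argument reduces to bookkeeping with $2$-adic valuations; the only subtlety I anticipate is recognizing that the divisors $g$ of $n$ with $v_2(g)=v_2(n)$ (precisely the ones excluded from $D_n$) are exactly the ones absorbed into $C_{n,g/2}$ through the secondary clause, which is what the definition of $C_{n,k}$ was engineered to accommodate.
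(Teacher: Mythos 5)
Your proof is correct and follows essentially the same route as the paper: the existence half is the paper's covering argument almost verbatim (set $g=\GCD(n,m)$, take $k=g$ when $g\in D_n$ and $k=g/2$ when $n$ is even and $n/g$ is odd), and your uniqueness half repackages the paper's pairwise-disjointness case analysis into the cleaner observation that $g$ alone determines which clause of the definition of $C_{n,k}$ applies and hence pins down $k$ as $g$ or $g/2$. No gaps.
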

  \begin{proof}
    First we show that for distinct $k,k'\in D_n$, we have that $C_{n,k}\cap C_{n,k'}=\emptyset$.
    Indeed, suppose to the contrary that there exists $m\in [n]$ belonging to $C_{n,k}\cap C_{n,k'}$.
    If  $k$ and $k'$ are such that  $\frac{n}{k}= \frac{n}{k'}= 2 \: (\md 4)$, then $\GCD(n,m)\in \{k,2k\}\cap \{k',2k'\}$. Note that $n$ is necessarily even in this case.
    The only way for $k$ and $k'$ to be distinct is if, say,  $\GCD(n,m)=2k=k'$.
    Since $\frac{n}{k}= 2 \: (\md 4)$, we know that  $\frac{n}{2k}$ is odd, which in turn means that $\frac{n}{k'}$ is odd. But this is absurd as, by definition, $k'\in D_n$ necessarily implies that $\frac{n}{k'}$ is even.

\medskip

An argument similar to the one just given also works in the case where we assume that $k$ and $k'$ are such that  $\frac{n}{k}= 2 \: (\md 4)$ and $ \frac{n}{k'}\neq 2 \: (\md 4)$.
    Finally, if  $k$ and $k'$ are such that  $\frac{n}{k}\neq 2 \: (\md 4)$ and $ \frac{n}{k'}\neq 2 \: (\md 4)$, then $\GCD(n,m)=k=k'$, a contradiction.
    Thus, we see that $C_{n,k}\cap C_{n,k'}=\emptyset$ for distinct $k,k'\in D_n$.

    \medskip

    To finish the proof, given $m\in [n]$, let $k\coloneqq \GCD(n,m)$.
    If $n$ is odd, or $n$ and $\frac{n}{k}$ are both even, then $k\in D_n$ and hence $m\in C_{n,k}$.
    Otherwise we are in the case where $n$ is even but $\frac{n}{k}$ is odd.
    It must be that $k$ is even as well, and therefore $\frac{k}{2}\in D_n$.
    Since $\GCD(n,m)=2\cdot \frac{k}{2}$ and $\frac{n}{k/2}=2 \: (\md 4)$, we have that $m\in C_{n,\frac{k}{2}}$.
  \end{proof}

\medskip

We are now ready for the classification.
\begin{theorem}
  \label{thm:iso_classes}
For $k\in D_n$, the representations $\tau_{n,c}$ are isomorphic for all $c\in C_{n,k}$.
Furthermore, for distinct $k,k' \in D_n$, we have that $\tau_{n,c} $ and $ \tau_{n,c'}$ are non-isomorphic for every $c\in C_{n,k}$ and $c'\in C_{n,k'}$.
\end{theorem}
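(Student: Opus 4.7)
The plan is to deduce the classification directly from the character formula of Theorem~\ref{thm:main_1}. Since the $\tau_{n,c}$ are complex representations of $S_n$, we have $\tau_{n,c}\cong\tau_{n,c'}$ if and only if $\chi_{n,c}=\chi_{n,c'}$. Inspecting the formula, for $\pi$ of cycle type $\lambda$ with $d=\GCD(\lambda)$, the value $\chi_{n,c}(\pi)$ depends on $c$ only through a single divisibility test, and the nonzero value is a function of $d$, $n$, $\ell$ alone. Hence the equality $\chi_{n,c}=\chi_{n,c'}$ reduces to checking, for every divisor $d\mid n$, that the associated divisibility condition has the same truth value for $c$ as for $c'$.

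Next I would reindex these conditions by elements of $D_n$. For $d$ odd (resp.\ $d$ even with $n/d$ even), the condition is $d\mid c$, and such $d$ are precisely the elements of $D_n$. For $d$ even with $n/d$ odd, the condition is $(d/2)\mid c$; setting $m=d/2$, one has $v_2(m)=v_2(n)-1$ and $n/m=2(n/d)$ even, so $m\in D_n$ and this condition is already captured. Consequently,
\[
\chi_{n,c}=\chi_{n,c'}\iff \{m\in D_n:m\mid c\}=\{m\in D_n:m\mid c'\}.
\]

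The core of the argument is showing that this divisibility set equals $\{m\in D_n:m\mid k\}$ for every $c\in C_{n,k}$. If $n/k\not\equiv 2\pmod 4$, then $\GCD(n,c)=k$ for every $c\in C_{n,k}$, and the claim is immediate. If $n/k\equiv 2\pmod 4$, one may also have $\GCD(n,c)=2k$, and the task becomes to show that any $m\in D_n$ dividing $2k$ already divides $k$. This is where a $2$-adic argument is essential, and I expect it to be the main technical hurdle: for odd $m$ it is trivial; for even $m\in D_n$, the defining condition of $D_n$ forces $v_2(m)<v_2(n)$, while $n/k\equiv 2\pmod 4$ gives $v_2(n)=v_2(k)+1$, so $v_2(m)\le v_2(k)$, and $m\mid 2k$ then yields $m\mid k$.

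Finally, for the non-isomorphism half, I would argue directly from the previous paragraph: if $k,k'\in D_n$ are distinct and $\{m\in D_n:m\mid k\}=\{m\in D_n:m\mid k'\}$, then since $k\in D_n$ and $k\mid k$ the element $k$ lies in both sets, forcing $k\mid k'$; symmetrically $k'\mid k$; hence $k=k'$, a contradiction. Combined with the constancy statement above, this yields the full classification.
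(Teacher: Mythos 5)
Your proposal is correct and follows essentially the same route as the paper: both deduce the classification purely from the character formula of Theorem~\ref{thm:main_1} by comparing, divisor by divisor of $n$, the divisibility conditions imposed on $c$, with the same $2$-adic/parity computations at the technical core. Your packaging of everything into the single invariant $\{m\in D_n:m\mid c\}$ is a somewhat cleaner organization than the paper's three-case analysis (and its separate conjugacy-class argument via $\lambda=(k^{n/k})$ for the non-isomorphism half), but the underlying content is identical.
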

\begin{proof}
  Pick $k\in D_n$.
  Note that $k\in C_{n,k}$.
  Consider $k' \in C_{n,k}$ distinct from $k$.
  Crucially, we are guaranteed that $\GCD(n,k')\in \{k,2k\}$.
   We first show that the characters $\chi_{{n,k}}$ and $\chi_{{n,k'}}$ agree on all conjugacy classes of $S_n$.
   We appeal to the character values defined by Theorem~\ref{thm:main_1} throughout our argument.
  Let $\pi\in S_n$ have cycle type $\lambda$, and write $d\coloneqq \GCD(\lambda)$.

\medskip

  \textsf{Case I:} Suppose $d$ is odd. To establish $\chi_{n,k}(\pi)$  and $\chi_{n,k'}(\pi)$ are equal, it suffices to show that $d|k\Leftrightarrow d|k'$. The forward direction is immediate, while the reverse implication follows easily from the fact that $\GCD(n,k')\in \{k,2k\}$, and we infer that $d|k$ as $d$ is odd.

\medskip

  \textsf{Case II:} Suppose $d$ and $\frac{n}{d}$ are both even. Once again, we need to show that $d|k\Leftrightarrow d|k'$, and we deal with the reverse direction.
  Assume $d|k'$.
  If $\GCD(n,k')=k$, then $d|k$ is immediate.
  We are left to deal with the case where $\frac{n}{k}= 2 \: (\md 4)$ and $\GCD(n,k')=2k$.
  Then we have that $\frac{n}{2k}$ is odd and $d|2k$.
  Now note that
  \begin{align}
    \frac{n}{2k}=\frac{n/d}{2k/d},
  \end{align}
  and since $\frac{n}{d}$ is even, it must be that $\frac{2k}{d}$ is even as well, from which it follows that $d|k$.

\medskip

\textsf{Case III:} Suppose $d$ is even but $\frac{n}{d}$ is odd.
We need to establish that $d|2k  \Leftrightarrow d|2k'$.
Again, we only need to deal with the reverse direction.
Suppose the stronger statement $d|k'$ holds.
 Since $\GCD(n,k')\in \{k,2k\}$, we have that $d|2k$.

Finally, suppose that $d\nmid k'$ but $d|2k'$. Then it must be that $\frac{2k'}{d}$ is odd.
If $\GCD(n,k')=k$, then by multiplying both sides by $2$, we conclude that $d|2k$.
Hence consider the case where $\frac{n}{k}=2 \: (\md 4)$ and $\GCD(n,k')=2k$.
Then we know that $\frac{n}{2k}$ is odd, and that $\GCD(\frac{2n}{d},\frac{2k'}{d})=\frac{4k}{d}$.
We conclude that $\frac{4k}{d}$ is odd as $\frac{2k'}{d}$ is odd.
Now note that $\frac{n}{2k}=\frac{2n/d}{4k/d}$
is odd, which is absurd as $\frac{2n}{d}$ is even while $\frac{4k}{d}$ is odd.
Thus, we see that the scenario $\frac{n}{k}=2 \: (\md 4)$ and $\GCD(n,k')=2k$ is impossible.

 \medskip

 At this stage, we know that for a fixed $k\in D_n$ the representations $\tau_{n,c}$ are isomorphic for all $c\in C_{n,k}$.
 In particular, they are isomorphic to $\tau_{n,k}$.
 To finish the proof, we show that $\tau_{n,k}$ and $ \tau_{n,k'}$ are nonisomorphic for distinct $k,k'\in D_n$ by finding a conjugacy class where they disagree.

 \medskip

 Without loss of generality, suppose $k>k'$.
 Let $\lambda\coloneqq (k^{\frac{n}{k}})$ and pick $\pi\in S_n$ with cycle type $\lambda$.
 We have $\GCD(\lambda)=k$, and therefore we see that $\chi_{{n,k}}(\pi)$ is nonzero, whereas $\chi_{{n,k'}}(\pi)$ is zero unless we are in the setting where $k$ is even, $\frac{n}{k}$ is odd, and $k|2k'$.
 This situation is impossible as $k\in D_n$ implies $\frac{n}{k}$ is even. This finishes the proof.
\end{proof}

\medskip

As an immediate consequence of Theorem~\ref{thm:iso_classes}, we have:
\begin{corollary}
  \label{cor:num_noniso_classes}
  There are $|D_n|$ many non-isomorphic representations among the $\tau_{n,c}$.
\end{corollary}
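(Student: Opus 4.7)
The plan is to read off the corollary directly from the structural results assembled just before it. Theorem~\ref{thm:iso_classes} has two parts: for each fixed $k\in D_n$, all representations $\tau_{n,c}$ with $c\in C_{n,k}$ are mutually isomorphic; and for distinct $k,k'\in D_n$, the representations $\tau_{n,c}$ and $\tau_{n,c'}$ are non-isomorphic whenever $c\in C_{n,k}$ and $c'\in C_{n,k'}$. In other words, the family $\{C_{n,k}\}_{k\in D_n}$ refines the parameter set $[n]$ into the isomorphism classes of the family $\{\tau_{n,c}\}_{c\in [n]}$.

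To conclude, I would invoke the preceding lemma showing $\coprod_{k\in D_n} C_{n,k}=[n]$. This partition guarantees that every $c\in [n]$ lies in exactly one class $C_{n,k}$, so the index $k$ is well-defined as a function of $c$ and Theorem~\ref{thm:iso_classes} then yields a bijection between $D_n$ and the set of isomorphism classes of the $\tau_{n,c}$. Hence the number of non-isomorphic representations is $|D_n|$, proving the corollary.

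There is no substantive obstacle here, as all the work has been done in Theorem~\ref{thm:iso_classes} and the partition lemma; the only thing to be careful about is to state explicitly that we need both directions (same class $\Rightarrow$ isomorphic, and different classes $\Rightarrow$ non-isomorphic) so that the correspondence is genuinely a bijection rather than merely a surjection. If one also wishes to emphasize the enumerative content alluded to in the introduction, one can append a sentence invoking Lemma~\ref{lem:num_D_n} to rewrite $|D_n|$ as the number of divisors of $n$ that are not congruent to $2$ modulo $4$.
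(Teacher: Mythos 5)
Your proposal is correct and follows exactly the route the paper intends: the paper states the corollary as an immediate consequence of Theorem~\ref{thm:iso_classes} together with the partition $\coprod_{k\in D_n} C_{n,k}=[n]$, which is precisely the bijection you spell out. Nothing further is needed.
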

In view of Lemma~\ref{lem:num_D_n}, we have that $|D_n|$ is given by \cite[A320111]{oeis}.
Observe also the curious fact that the sequence $\{|D_n|\}_{n\geq 1}$ gives a multiplicative arithmetic function.
\begin{example}
  \emph{
  Consider $n=6$. Then $D_6=\{1,3\}$. Here are the power sum expansions for the two non-isomorphic representations amongst the $\tau_{6,c}$ for $c\in[6]$:
  \begin{align*}
    \mathrm{Frob}(\tau_{6,1})&=\frac{9}{5}p_{1^6} + \frac{9}{2}p_{21^4} + \frac{9}{4}p_{221^2} + \frac{1}{4}p_{222} + 2p_{31^3} + p_{321} + \frac{3}{4}p_{41^2} + \frac{1}{4}p_{42} + \frac{1}{5}p_{51},\\
    \mathrm{Frob}(\tau_{6,3})&=\frac{9}{5}p_{1^6} + \frac{9}{2}p_{21^4} + \frac{9}{4}p_{221^2} + \frac{1}{4}p_{222} + 2p_{31^3} + p_{321} + \frac{1}{2}p_{33} + \frac{3}{4}p_{41^2} + \frac{1}{4}p_{42} + \frac{1}{5}p_{51} + \frac{1}{2}p_{6}.
  \end{align*}
  The proof of Theorem~\ref{thm:iso_classes} predicts that $p_{33}$ appears with a nonzero coefficient in $\mathrm{Frob}(\tau_{6,3})$ but not in $\mathrm{Frob}(\tau_{6,1})$, as can be seen in the expansions.
  }
\end{example}

%%%%%%%%%%%%%%%%%%%%%%%%%%%%%%%

\section{Two special cases}
We now proceed to discuss the special cases $c=n$ (or equivalently $c=0$) and $c=1$.
We focus in particular on the number of orbits of our $S_n$ action, which is, in view of Burnside's lemma, equal to the multiplicity of the trivial representation.

\subsection{The case \texorpdfstring{$c=n$}{}}
Given a positive integer $n$, define an auxiliary function $f_n$ on the set of divisors of $n$ as follows:
\begin{align}
  f_n(d)=
  \left\lbrace \begin{array}{ll}
  1/2 & d \text{ even, } n/d \text{ odd,}\\
  1 & \text{otherwise.}
  \end{array}\right.
\end{align}
In terms of $f_n$, note that the character $\chi_{n,n}$ at the conjugacy class determined by the partition $\lambda\vdash n$ is $f_n(d)\cdot d^2n^{\ell(\lambda)-2}$ where $d\coloneqq \GCD(\lambda)$.
To establish a formula for the number of orbits, we need some more notation followed by a key lemma.
For a positive integer $m$, let $J_2(m)$ be the \emph{Jordan totient function} defined as
\begin{align}
  \label{eqn:jordan_totient_function}
  J_2(m)\coloneqq m^2\prod_{\text{prime }p|m}\left(1-\frac{1}{p^2}\right).
\end{align}
It is well known that $\sum_{d | m} J_2(d) = m^2$, in other words, $\sum_{d|m} \mu(m/d) d^2 = J_2(m)$.

%One can write this function as the product $\phi(m)\cdot \psi(m)$, where $\phi$ denotes the usual \emph{Euler totient function} where $\psi$ denotes the \emph{Dedekind psi function}.

\begin{lemma}
  \label{lem:jordan_totient}
  For fixed positive integer $e$ and $m$, let $n\coloneqq me$ and define
  \begin{align*}
    F(m,e)\coloneqq \sum_{d|m}\mu\left(\frac{m}{d}\right)f_{n}(d)d^2.
  \end{align*}
  Then we have that
  \begin{align*}
    F(m,e)=\left\lbrace
    \begin{array}{ll}
      J_2(m)  & e \text{ even or } m \text{ odd,}\\
      \frac{1}{3}J_2(m) & e\text{ odd and } m \text{ even.}
    \end{array}\right.
  \end{align*}
\end{lemma}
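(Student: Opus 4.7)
The plan is to case-split on the parity of $e$ and of $m$, reducing the only interesting situation to a short computation involving the $2$-adic part of $m$. The key reformulation is that, for $d \mid m$, one has $n/d = (m/d)\,e$, so the condition "$d$ even and $n/d$ odd" defining $f_n(d) = 1/2$ becomes "$d$ even, $m/d$ odd, and $e$ odd."

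With this reformulation the two easy cases drop out immediately. If $e$ is even, then $n/d$ is even for every $d\mid m$, so $f_n(d)\equiv 1$; if instead $m$ is odd, then every divisor of $m$ is odd, so again $f_n(d)\equiv 1$. In either of these subcases,
\[
F(m,e) \;=\; \sum_{d\mid m}\mu(m/d)\,d^{2} \;=\; J_2(m),
\]
by the classical M\"obius identity recalled immediately before the lemma.

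The remaining case, $e$ odd and $m$ even, is where the $\tfrac13$ has to appear. Here I would write $m = 2^{a} m'$ with $a \geq 1$ and $m'$ odd, and observe that $f_n(d) = 1/2$ happens precisely when $2^{a}\mid d$ (which automatically forces $m/d$ to be odd since $m'$ is). Writing such $d$ as $d = 2^{a} d'$ with $d' \mid m'$ collapses the correction sum:
\[
F(m,e) \;=\; J_2(m) \;-\; \tfrac{1}{2}\!\!\sum_{\substack{d\mid m\\ 2^{a}\mid d}}\!\!\mu(m/d)\,d^{2} \;=\; J_2(m) \;-\; \tfrac{1}{2}\cdot 4^{a}\!\sum_{d'\mid m'}\mu(m'/d')\,(d')^{2} \;=\; J_2(m) \;-\; \tfrac{1}{2}\cdot 4^{a}\,J_2(m').
\]
Now multiplicativity of $J_2$, together with $J_2(2^{a}) = 4^{a}(1-1/4) = 3\cdot 4^{a-1}$, gives $J_2(m) = 3\cdot 4^{a-1}J_2(m')$, so the right-hand side simplifies to $4^{a-1}J_2(m') = \tfrac13 J_2(m)$, as claimed.

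The only place that needs genuine care is the identification, in the last case, of the set of divisors carrying the $1/2$ factor as exactly those with $2$-adic valuation equal to $a = v_2(m)$; everything else is bookkeeping plus the standard multiplicativity of the Jordan totient. I expect no other real obstacle.
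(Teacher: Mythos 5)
Your proof is correct and follows essentially the same route as the paper's: both arguments dispose of the easy parities immediately and then, for $e$ odd and $m=2^a m'$ even, reduce everything to the divisors with $2$-adic valuation $a$ and to $J_2(m')$. The only cosmetic difference is that you compute the hard case as $J_2(m)$ minus half a correction sum and invoke multiplicativity of $J_2$, whereas the paper directly discards the terms with $v_2(d)\le a-2$ (where $\mu(m/d)=0$) and combines the two surviving valuation classes via $\mu(2m'/d')=-\mu(m'/d')$.
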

\begin{proof}
  Note that if $e$ is even, then $n/d$ is even, and hence $f_n(n/d)=1$.
  In this case we have
  \begin{align}
    \label{eqn:easy_case}
    F(m,e)=\sum_{d|m}\mu\left(\frac{m}{d}\right)d^2 = J_2(m).
  \end{align}

  \medskip

  Assume $e$ is odd.
  If $m$ is odd, then again we have that  $F(m,e)$ equals $J_2(m)$ from \eqref{eqn:easy_case}.
  Assume $v_2(m)=k \geq 1$. Then $m=2^{k}m'$, where $m'$ is odd.
  Note that if $d=2^{v_2(d)}d'$ is a divisor of $m$ such that $v_2(d)\leq k-2$, then
  \begin{align}
    \mu\left(\frac{m}{d}\right)=\mu\left(2^{k-v_2(d)}\frac{m'}{d'}\right),
  \end{align}
  where $\frac{m'}{d'}$ is odd.
  It follows that $\mu(m/d)=0$ as $m/d$ is not squarefree.

\medskip

Thus, we may assume that $d=2^{k-1}d'$ or $d=2^{k}d'$, where $d'|m'$.
In the former case, we have that $n/d$ is even, and in the latter case we have that $d$ is even while $n/d$ is odd. Thus we obtain
\begin{align}
  F(m,e)&=\sum_{d'|m'}\mu\left(\frac{2m'}{d'}\right)2^{2(k-1)}d'^2+\sum_{d'|m'}\mu\left(\frac{m'}{d'}\right)2^{2k-1}d'^2\nonumber \\&=
  2^{2k-2}\sum_{d'|m'}\mu\left(\frac{m'}{d'}\right)d'^2.
\end{align}
In arriving at the last equality, we made use of the fact that $\mu\left(\frac{2m'}{d'}\right)=-\mu\left(\frac{m'}{d'}\right)$. Therefore
$$F(m,e) = 2^{2k-2} J_2(m') = \frac 1 3 J_2(m),$$
which finishes the proof.

%\medskip

%To finish the proof, we need to evaluate the expression in \eqref{eqn:easy_case}
%for any integer $m$.
%Since $F(m)$ is the Dirichlet convolution of $\mu$ and the function sending a natural number $n$ to $n^2$, standard results imply that the Dirichlet series for $F(m)$ can be written as
%\begin{align}\label{eqn:dirichlet_1}
  %\sum_{m\geq 1}\frac{F(m)}{m^s}=\frac{\zeta(s-2)}{\zeta(s)}.
%\end{align}
%where $\zeta(s)\coloneqq \sum_{n\geq 1}\frac{1}{n^s}$ and $s$ is a complex parameter (interpreted formally for our purposes).
%We use the Euler product for Dirichlet series to rewrite \eqref{eqn:dirichlet_1} as
%\begin{align}\label{eqn:dirichlet_2}
  %\sum_{m\geq 1}\frac{F(m)}{m^s}&=\prod_{p \text{ prime }}\left(1+\sum_{k\geq 1}\frac{p^{2k}}{p^{ks}}\right)\left(1-\frac{1}{p^s}\right)\\
  %&=\prod_{p \text{ prime }}\left(1+\sum_{k\geq 1}\frac{p^{2k-2}(p^2-1)}{p^{ks}}\right)
%\end{align}
%It follows that
%\begin{align}
%  F(m)=m^2\prod_{\text{prime }p|m}\left(1-\frac{1}{p^2}\right)=J_2(m).
%  % &=m^2\prod_{p|m}\left(1-\frac{1}{p}\right)\left(1+\frac{1}{p}\right)=\phi(m)\psi(m).
%\end{align}
%From this the claim follows upon noting that $2^{2k-2}J_2(m')=\frac{1}{3}J_2(m)$.
\end{proof}

\begin{theorem}
  \label{thm:orbits_c_0}
    Let $o_{n,n}$ denote the number of orbits of $\epf{n,n}$ under the action $\tau_{n,n}$. Then
    \begin{align*}
      o_{n,n}=\frac{1}{n^2}\sum_{e|n}\binom{2e-1}{e}F({n}/{e},e).
    \end{align*}
\end{theorem}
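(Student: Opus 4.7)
The plan is to apply Burnside's lemma and then reorganize the resulting sum until it takes the shape of a convolution of $\binom{2e-1}{e}$ with $F(n/e,e)$. First I would observe that for $c=n$, the two divisibility conditions $d \mid c$ and $d \mid 2c$ appearing in Theorem~\ref{thm:main_1} are automatic (since $d = \GCD(\lambda)$ divides $n$), so the character simplifies uniformly to $\chi_{n,n}(\pi) = f_n(d)\,d^2\,n^{\ell(\lambda)-2}$, as already noted in the paragraph preceding the statement of the theorem. Grouping elements of $S_n$ by cycle type then yields
\begin{align*}
o_{n,n} \;=\; \frac{1}{n!}\sum_{\pi \in S_n}\chi_{n,n}(\pi) \;=\; \frac{1}{n^2}\sum_{\lambda \vdash n}\frac{f_n(d(\lambda))\,d(\lambda)^2\,n^{\ell(\lambda)}}{z_\lambda}.
\end{align*}

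Next I would stratify partitions of $n$ by their GCD, writing every $\lambda \vdash n$ uniquely as $\lambda = d \cdot \mu$ with $\mu \vdash e\coloneqq n/d$ and $\GCD(\mu) = 1$. A direct multiplicity count gives $\ell(\lambda) = \ell(\mu)$ and $z_\lambda = d^{\ell(\mu)}\,z_\mu$, converting the sum to
\begin{align*}
o_{n,n} \;=\; \frac{1}{n^2}\sum_{d \mid n} f_n(d)\,d^2 \sum_{\substack{\mu \vdash e \\ \GCD(\mu)=1}} \frac{e^{\ell(\mu)}}{z_\mu}.
\end{align*}
The inner sum I would evaluate in two steps. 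The standard exponential identity
\begin{align*}
\sum_{m\geq 0} t^m \sum_{\mu \vdash m}\frac{x^{\ell(\mu)}}{z_\mu} \;=\; \exp\!\Bigl(x\sum_{k\geq 1}\tfrac{t^k}{k}\Bigr) \;=\; (1-t)^{-x}
\end{align*}
specializes at $x = m = e$ to $\sum_{\mu \vdash e} e^{\ell(\mu)}/z_\mu = \binom{2e-1}{e}$. Decomposing each $\mu \vdash e$ by $\GCD(\mu)$ shows that this total equals $\sum_{f \mid e} g(f)$, where $g(f)$ denotes the subsum restricted to $\mu \vdash f$ with $\GCD(\mu) = 1$; Möbius inversion therefore gives $g(e) = \sum_{f \mid e}\mu(e/f)\binom{2f-1}{f}$.

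Finally, I would substitute this expression into the outer sum and swap the order of summation, taking $f \mid n$ as the new outer variable and $a \coloneqq (n/f)/(e/f) = (n/d)$ scaled appropriately as the new inner one. A short manipulation shows that the coefficient of $\binom{2f-1}{f}$ in the resulting expression is exactly
\begin{align*}
\sum_{a \mid n/f} \mu((n/f)/a)\,f_n(a)\,a^2,
\end{align*}
which matches $F(n/f,f)$ as defined in Lemma~\ref{lem:jordan_totient}. Renaming $f$ back to $e$ yields the formula claimed in the theorem. The main obstacle is bookkeeping: one must track carefully how $z_\lambda$, $\ell(\lambda)$, and the powers of $n$ and $d$ transform under $\lambda = d\mu$, and align the two Möbius inversions---one implicit in the definition of $F$, one explicit in extracting the $\GCD=1$ subsum---so that the inner sum crystallizes as $F(n/e,e)$ rather than something merely proportional to it.
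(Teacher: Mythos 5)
Your proposal is correct and follows essentially the same route as the paper: apply Burnside, stratify partitions by their GCD (your $g(f)$ is exactly $f^2 a_f$ in the paper's notation), evaluate $\sum_{\mu\vdash e}e^{\ell(\mu)}/z_\mu=\binom{2e-1}{e}$ via the $h_e(1^e)$ specialization, Möbius-invert, and swap the order of summation to recognize $F(n/e,e)$. The only cosmetic difference is that you derive the binomial identity from the exponential generating function $(1-t)^{-x}$ rather than from $\sum_\lambda z_\lambda^{-1}p_\lambda=h_n$; the bookkeeping you flag as the main obstacle is handled correctly.
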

\begin{proof}
  By interpreting the number of orbits as the multiplicity of the trivial representation, we have that
  \begin{align}
    \label{eqn:initial_expression_0_orbit}
    o_{n,n}=\sum_{d|n}f_n(d)\sum_{
    \substack{\lambda\vdash n\\\GCD(\lambda)=d}
    }
    \frac{d^2n^{\ell(\lambda)-2}}{z_{\lambda}}.
  \end{align}
  Denote the inner sum in \eqref{eqn:initial_expression_0_orbit} by $a_{n,d}$.
  Note that if $\lambda=(\lambda_1,\dots,\lambda_{\ell})$ satisfies $\GCD(\lambda)=d$, then the partition $\tilde{\lambda}\coloneqq (\lambda_1/d,\dots,\lambda_{\ell}/d)$ satisfies $z_{\lambda}=d^{\ell(\lambda)}z_{\tilde{\lambda}}$ and $\GCD(\tilde{\lambda})=1$.
  Therefore, by scaling down the parts of $\lambda$ by $d$ in the sum defining $a_{n,d}$, we get that $a_{n,d}=a_{n/d,1}\eqqcolon a_{n/d}$.
  %For simplicity's sake, we set $a_{n/d}\coloneqq a_{n/d,1}$ henceforth.

  \medskip

  On the one hand, note that
  \begin{align}
    \label{eqn:rhs_1}
    \sum_{\lambda\vdash n}\frac{n^{\ell(\lambda)}}{z_{\lambda}}=\sum_{d|n}d^2a_{d}.
  \end{align}
  On the other hand, observe that $n^{\ell(\lambda)}$ is the specialization $p_\lambda(\underbrace{1,\ldots,1}_n,0,0,\ldots)$, and hence the relation $\sum_{\lambda} z_\lambda^{-1} p_\lambda = h_n$ implies
  \begin{align}\label{eqn:rhs_2}
  \sum_{\lambda\vdash n}\frac{n^{\ell(\lambda)}}{z_{\lambda}} = h_n(1,\ldots,1,0,0,\ldots) = \binom{2n-1}n.
  \end{align}
  Equating the right hand sides of equations ~\eqref{eqn:rhs_1} and \eqref{eqn:rhs_2} and then applying M\"{o}bius inversion to the resulting relation yields
  \begin{align}
    \label{eqn:nsquare_a_n}
    n^2a_{n}=\sum_{d|n}\mu\left(\frac{n}{d}\right)\binom{2d-1}{d},
  \end{align}

\medskip

Now rewrite \eqref{eqn:initial_expression_0_orbit} as
\begin{align}
  o_{n,n}=\sum_{d|n}f_n(d)a_{n/d},
\end{align}
and use \eqref{eqn:nsquare_a_n} to obtain
\begin{align}
  o_{n,n}&=\frac{1}{n^2}\sum_{d|n}f_n(d)d^2\sum_{e|\frac{n}{d}}\mu(n/ed)\binom{2e-1}{e}\nonumber\\
  &=\frac{1}{n^2}\sum_{e|n}\binom{2e-1}{e}\sum_{d|\frac{n}{e}}\mu(n/ed)f_n(d)d^2,
\end{align}
which gives $\frac{1}{n^2}\sum_{e|n}\binom{2e-1}{e}F({n}/{e},e)$ by the definition of $F$.
\end{proof}

Note that Lemma~\ref{lem:jordan_totient} tells us that the $F({n}/{e},e)$ appearing in the theorem is equal to $J_2(n/e)$, up to a potential factor of $1/3$.

\begin{example}
  \emph{
  Consider the case $\epf{3,3}$. Theorem~\ref{thm:main_1} gives the following power sum expansion for the Frobenius characteristic of the action $\tau_{3,3}$:
  \begin{align*}
    \mathrm{Frob}(\tau_{3,3})=p_3+\frac{p_{21}}{2}+\frac{p_{111}}{2}=2s_3+s_{111} = 3h_3-2h_{21}+h_{111}.
  \end{align*}
  We naturally expect a Schur-positive expansion for $\mathrm{Frob}(\tau_{3,3})$, though note that it is not $h$-positive.
  The Schur expansion also tells us that we have $2$ orbits under the action $\tau_{3,3}$. To see that we get the same quantity from Theorem~\ref{thm:orbits_c_0}, note that the possible values of $e$ are $1$ and $3$; the resulting $F(n/e,e)$ is equal to $8$ and $1$, respectively. Thus we get $o_{3,3}=\frac{1}{9}\left(1\cdot 8 + 10\cdot 1\right)=2$.
  }
\end{example}

\subsection{The case \texorpdfstring{$c=1$}{}.}
We now turn our attention to the case of the $S_n$ action on $\epf{n,1}$.
This is also the case which we believe (see Conjecture \ref{conj1}) to be pertinent from the viewpoint of the work of Berget-Rhoades \cite{BR14}.
%Recall that we denote the action under consideration by $\tau_n$, and the associated character by $\chi_n$.
%Our primary aim is to compute the number of orbits under $\tau_n$.
Curiously, even though the character values turn out be simpler than those in the case $c=n$, unlike Theorem~\ref{thm:orbits_c_0}, we obtain a signed (yet still compact) expression for the number of orbits.

\medskip

\subsection*{Proof of Theorem \ref{thm2}}
As an immediate Corollary to Theorem~\ref{thm:main_1}, the character value $\chi_{n,1}(\pi)$, where $\pi\in S_n$ has cycle type $\lambda$, is given by
\begin{align}
  \label{eqn:char_vals_1}
\chi_{n,1}(\pi) = \begin{cases} n^{\ell-2} & d = 1 \\ 2n^{\ell-2} & d = 2, \: n = 2 \: (\md 4) \\ 0 & \text{otherwise.} \end{cases},
\end{align}
where $d\coloneqq \GCD(\lambda)$.

Let $o_{n,1}$ denote the number of orbits.
We borrow notation established in the proof of Theorem~\ref{thm:orbits_c_0}.
Using the description in \eqref{eqn:char_vals_1}, we have that
   the number of orbits $o_{n,1}$ under  $\tau_n$ on $\epf{n,1}$ satisfies
\begin{align}
o_{n,1} & = \left\lbrace \begin{array}{ll}
a_n+\frac{1}{2}a_{\frac{n}{2}} & \text{if } n = 2 \: (\md 4)\\
a_n & \text{otherwise}.
\end{array}\right.
\end{align}
From \eqref{eqn:nsquare_a_n}, we have an expression for $a_n$, which clearly agrees with the expression for $o_{n,1}$ in the statement of Theorem~\ref{thm2} in the case $n$ is odd, as $n+d$ is necessarily even.
Similarly, if $4|n$, then $\mu(n/d)$ is $0$ unless $d$ is an even divisor, and then $n+d$ is again even.
The case $n=2 \: (\md 4)$ needs some more manipulation.
In this case, we have
\begin{align}
  \label{eqn:o_n,1}
  o_{n,1}=\frac{1}{n^2}\sum_{d|n}\mu\left(\frac{n}{d}\right)\binom{2d-1}{d}+\frac{2}{n^2}\sum_{d|\frac{n}{2}}\mu\left(\frac{n}{2d}\right)\binom{2d-1}{d}
\end{align}
Note that $d|\frac{n}{2}$ is equivalent to stating that $d$ is an odd divisor of $n$.
Since $\frac{n}{2d}$ is odd for $d|\frac{n}{2}$, we have
\begin{align}
  \mu\left(\frac{n}{2d}\right)=-\mu\left(\frac{n}{d}\right),
\end{align}
which allows us to rewrite \eqref{eqn:o_n,1} as
\begin{align}
  o_{n,1}=\frac{1}{n^2}\sum_{\substack{d|n\\ d \text{ even}}}\mu\left(\frac{n}{d}\right)\binom{2d-1}{d}-\frac{1}{n^2}\sum_{\substack{d|n\\ d \text{ odd}}}\mu\left(\frac{n}{d}\right)\binom{2d-1}{d}.
\end{align}
Theorem~\ref{thm2} follows.

\begin{example}
  \emph{
  Consider the case $\epf{3,1}$. Theorem~\ref{thm:main_1} gives the following power sum expansion for the Frobenius characteristic of the action $\tau_{n,1}$:
  \begin{align*}
    \mathrm{Frob}(\tau_{3,1})=\frac{p_{21}}{2}+\frac{p_{111}}{2}=s_3+s_{21}=h_{21}.
  \end{align*}
  Again, we know that $\mathrm{Frob}(\tau_{3,1})$ is Schur-positive; in fact, in contrast to the case $c=n$, it is $h$-positive.
  The Schur expansion also tells us that we have $1$ orbit under the action $\tau_{3,1}$. To see that we get the same quantity from Theorem~\ref{thm2}, note that we have the possible $d$ being $1$ or $3$, and we get $o_{3,1}=\frac{1}{9}\left((-1)\cdot 1 + 1\cdot 10\right)=1$.}
\end{example}

\medskip

We conclude this section with a couple of remarks, the first of which concerns the multiplicity of another irreducible representation in the representation $\tau_{n,1}$.
{Let $[s_{\lambda}]\tau_{n,1}$ denote the multiplicity in $\tau_{n,1}$ of the irreducible representation of $S_n$ corresponding to $\lambda\vdash n$. In particular, $[s_n] \tau_{n,1} = o_{n,1}$.
	It follows easily that the multiplicity of the standard representation is
	\begin{align}\label{eqn:mult_standard}
	[s_{n-1,1}]\tau_{n,1}=\cat_{n-1} - o_{n,1}.
	\end{align}
	To see why this is true, recall on the one hand that $[s_{n-1}]\rho_{n-1}= \mathrm{Cat}_{n-1}$.
	On the other hand, since $\rho_{n-1}$ is the restriction of $\tau_{n-1}$, we have that $[s_{n-1}]\rho_{n-1}=[s_{n-1,1}]\tau_{n,1} + [s_{n}]\tau_{n,1}.$
	Since $[s_{n}]\tau_{n,1}=o_{n,1}$, the equality \eqref{eqn:mult_standard} follows.
}

\medskip

Our second remark is inspired by the grading in the Berget-Rhoades representation that stems from the area statistic for parking functions.
Given a parking function $\bfx=(x_1,\dots,x_n)\in \PF_n$, we define its \emph{area}, denoted by $\area{\bfx}$, as follows:
\begin{align}
\area{\bfx}=\binom{n}{2}-\sum_{1\leq i\leq n}x_i.
\end{align}
The area statistic was first studied by Kreweras \cite{Kre80} who related it to the inversion statistic on labeled trees.
If we now consider the $S_n$ module $\epf{n,c}$ where $c=\binom{n-1}{2} \: (\md n)$, then in the last coordinate of $\epf{n,c}$ we are recording the area statistic modulo $n$ of the parking function built from the preceding $n-1$ coordinates.
Observe that
\begin{align}
  \binom{n-1}{2}= \left\lbrace\begin{array}{ll}1 \: (\md n) & n \text{ odd}\\
  1+\frac{n}{2} \: (\md n) & n \text{ even.}
  \end{array}\right.
\end{align}
By using this fact, one can show that $\epf{n,c}$ is isomorphic to $\epf{n,1}$, which we conjecture to be the ungraded Berget-Rhoades representation.

\section{A generalization to certain families of rational parking functions}
\label{sec:generalization}
We broaden the scope of our results by applying our techniques to a subclass of the set of \emph{rational parking functions}.
These functions are a generalization of usual parking functions and their study is an active field of research in recent years \cite{ALW14, GM16, GMV16}.
Given the similarity in flavor to earlier arguments, we keep our exposition brief.

\medskip

	Consider coprime positive integers $a$ and $b$.
	Define an \emph{$(a,b)$-parking function} to be a sequence $(x_1,\dots,x_a)$ of nonnegative integers with the property that the weakly increasing arrangement $(z_1,\dots,z_a)$ satisfies $z_i \leq \frac{(i-1)b}{a}$.
	We denote the set of $(a,b)$-parking functions by $\PF_{a,b}$.
	%	Pictorially, consider the Young diagram corresponding to $\tilde{f}$ drawing in English notation in the top left corner of an $a\times b$ rectangle. Then, $f\in \PF_{a,b}$ provided that the Young diagram does not cross the line $y=\frac{ax}{b}$.
	As an example, consider the case where $a=3$ and $b=5$.
	The sequence $(1,0,3)$ is an element of $\PF_{3,5}$ as its weakly increasing arrangement $(0,1,3)$ satisfies the condition.
	On the other hand, it may be checked that $(2,0,3)\notin \PF_{3,5}$.
	It is clear that the set $\PF_{n,n+1}$ is the set $\PF_{n}$ from before.

	%Alternatively, observe that the Young diagram corresponding to $(3,1)$ depicted in Figure~\ref{fig:element_of_pf_{3,5}} does not cross the line $5y=3x$.

%\begin{figure}[h]
%	\begin{tikzpicture}[scale=.75]
%	\draw[gray, thin] (0,0) grid (5,3);
%	\draw[red!50,thick] (0,0)--(5,3);
%	\draw[blue, line width=0.4mm] (0,0)--(0,1);
%	\draw[blue, line width=0.4mm] (0,1)--(1,1);
%	\draw[blue, line width=0.4mm] (1,1)--(1,2);
%	\draw[blue, line width=0.4mm] (1,2)--(2,2);
%	\draw[blue, line width=0.4mm] (2,2)--(3,2);
%	\draw[blue, line width=0.4mm] (3,2)--(3,3);
%	\draw[blue, line width=0.4mm] (3,3)--(4,3);
%	\draw[blue, line width=0.4mm] (4,3)--(5,3);
%	\draw[fill=blue,opacity=0.3] (0, 1) rectangle (1, 2) {};
%	\draw[fill=blue,opacity=0.3] (0, 2) rectangle (1, 3) {};
%	\draw[fill=blue,opacity=0.3] (1, 2) rectangle (2,3) {};
%	\draw[fill=blue,opacity=0.3] (2, 2) rectangle (3, 3) {};
%
%
%	\end{tikzpicture}
%	\caption{The  $(3,5)$-Dyck path corresponding to $f=(1,0,3)\in \PF_{3,5}$.}
%	\label{fig:element_of_pf_{3,5}}
%\end{figure}

We denote the natural action of $S_a$ on $\PF_{a,b}$ by $\rho_{a,b}$.
A generalization of Pollak's proof implies that the map from $\PF_{a,b}\to \Z_{b}^{a-1}$ given by mapping $$(x_1,\dots,x_a)\mapsto (x_2-x_1,\ldots,x_{a}-x_{a-1}),$$
where subtraction is performed modulo $b$, is a bijection.
This implies that $|\PF_{a,b}|=b^{a-1}$.
Furthermore, the number of orbits under the action $\rho_{a,b}$ is the rational Catalan number $\mathrm{Cat}_{a,b}$ defined to equal $\frac{1}{a+b}\binom{a+b}{b}$.
See \cite[Proposition 2]{ALW14}, \cite[Theorem 2.4.1]{Sulz17}, \cite[Theorem 3.1.1]{Thi15} for proofs establishing the aforementioned facts.

\medskip

% Following \cite{GMV16},	given a parking function $\bfx=(x_1,\dots,x_a)\in \PF_{a,b}$, we define its \emph{area} as
% 	\begin{align}\label{eqn:defining_area}
% 	\mathrm{area}(\bfx)=\frac{(a-1)(b-1)}{2} -\sum_{1\leq i\leq a}x_i.
% 	\end{align}
% 	For instance, if we pick $\bfx=(1,0,1)\in \PF_{3,5}$, then we have $\mathrm{area}(\bfx)=4-(1+1)=2$.
%	This corresponds to the two red shaded boxes in Figure~\ref{fig:another_element_of_pf_{3,5}} that lie below the $(3,5)$-Dyck path corresponding to $f$ and above the line $5y=3x$. Observe also that there is a unique $(a,b)$-Dyck path with area $0$. Figure~\ref{fig:element_of_pf_{3,5}} show the one in the case of $\PF_{3,5}$.
%\begin{figure}[h]
%	\begin{tikzpicture}[scale=.75]
%	\draw[gray, thin] (0,0) grid (5,3);
%	\draw[red!50,thick] (0,0)--(5,3);
%	\draw[blue, line width=0.4mm] (0,0)--(0,1);
%	\draw[blue, line width=0.4mm] (0,1)--(1,1);
%	\draw[blue, line width=0.4mm] (1,1)--(1,2);
%	\draw[blue, line width=0.4mm] (1,2)--(1,3);
%	\draw[blue, line width=0.4mm] (1,3)--(2,3);
%	\draw[blue, line width=0.4mm] (2,3)--(3,3);
%	\draw[blue, line width=0.4mm] (3,3)--(4,3);
%	\draw[blue, line width=0.4mm] (4,3)--(5,3);
%	\draw[fill=blue,opacity=0.25] (0, 1) rectangle (1, 2) {};
%	\draw[fill=blue,opacity=0.25] (0, 2) rectangle (1, 3) {};
%	\draw[fill=red!40,opacity=0.7] (1, 2) rectangle (2,3) {};
%	\draw[fill=red!40,opacity=0.7] (2, 2) rectangle (3, 3) {};
%	\end{tikzpicture}
%	\caption{The  $(3,5)$-Dyck path corresponding to $f=(1,0,1)\in \PF_{3,5}$.}
%	\label{fig:another_element_of_pf_{3,5}}
%\end{figure}

\medskip

Mimicking our ideas from before, we construct a new set that is equinumerous with $\PF_{a,b}$.
For $c\in [b]$, define the set
	\[
	\epf{a,b,c} \coloneqq \{(x_1,\dots,x_{a+1}) \colon (x_1,\ldots,x_{a}) \in \PF_{a,b}, \: \sum_{1\leq i\leq a+1}x_i=c  \: (\md b) \}.
	\]
	As usual, we take $x_{a+1}$ to lie in $\{0,\dots,b-1\}$.
	Clearly, we have $|\epf{a,b,c}| = b^{a-1}$.

\medskip

In order to mimic our action from Section~\ref{sec:main_res}, we need to impose the constraint that $b|(a+1)$.
Henceforth, assume that this is indeed the case.
This given, we can construct an action $\tau_{a,b,c}$ of the symmetric group $S_{a+1}$ on $\epf{a,b,c}$.
	Take $\pi \in S_{a+1}$ and $(x_1,\ldots,x_{a+1}) \in \epf{a,b,c}$. Like before, $(x_{\pi_1},\ldots,x_{\pi_{a}})$ is not necessarily in $\PF_{a,b}$, and therefore $(x_{\pi_1},\ldots,x_{\pi_{a+1}})$ is not necessarily in $\epf{a,b,c}$.
	However, by the generalized Pollak's theorem, exactly one of the sequences $(y+x_{\pi_1},\ldots,y+x_{\pi_{a}})$ is in $\PF_{a,b}$, and therefore $(y+x_{\pi_1},\ldots,y+x_{\pi_{a+1}}) \in \epf{a,b,c}$.
	 This element is the action of $\pi$ on $(x_1,\ldots,x_{a+1})$.
	 The careful reader should note that we made use of the fact $b|(a+1)$ in obtaining an action.

Rather than repeating the analysis from before, we simply state our result for the case $c = 1$.
%Denote the resulting representation by $\tau_{a,b}$.
\begin{theorem}
	Take $a = kb-1$ for $b,k \in \N$. The map $\tau_{a,b,1}$ is an action of $S_{a+1}$ on $\epf{a,b,1}$ whose restriction to $S_{a}$ is isomorphic to $\rho_{a,b}$.
  Furthermore, the character $\chi_{\tau_{a,b,1}}$  can be computed as follows. Choose a permutation $\pi \in S_{a+1}$ with cycle type $(\lambda_1,\ldots,\lambda_\ell)$, and set $d \coloneqq \GCD(\lambda_1,\ldots,\lambda_\ell,b)$.
  Then
	$$\chi_{\tau_{a,b,1}(\pi)} = \begin{cases} b^{\ell-2} & d = 1 \\ 2b^{\ell-2} & d = 2, \: b = 2 \: (\md 4), \: k \text{ odd}\\ 0 & \text{otherwise} \end{cases}.$$
  Letting $o_{a,b,1}$ denote this number of orbits under $\tau_{a,b,1}$, we have the following equality:
  \begin{align*}%\label{eqn:orbits_bk}
  o_{a,b,1}=\frac{1}{b^2} \sum_{d | b}(-1)^{k(b+d)} \mu(b/d) \binom{(k+1)d-1}{kd}.
\end{align*}
\end{theorem}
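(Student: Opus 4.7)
The plan is to mirror the proofs of Theorems~\ref{thm:main_1} and~\ref{thm2}, replacing the modulus $n$ by $b$ and the ambient partition size $n$ by $a+1=kb$, while carefully tracking the refined GCD $d=\GCD(\lambda_1,\ldots,\lambda_\ell,b)$. The action $\tau_{a,b,1}$ is well-defined by the usual commutativity of variable permutation with uniform translation; the hypothesis $b\mid(a+1)$ is exactly what is needed so that $(y,\ldots,y)\in\Z_b^{a+1}$ preserves the congruence $\sum x_i\equiv1\pmod b$. Restriction to $S_a$ equals $\rho_{a,b}$ since any $\pi\in S_a$ fixes position $a+1$ and permutes an $(a,b)$-parking function to another, so no Pollak correction is needed.

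For the character I would fix $\pi\in S_{a+1}$ of cycle type $\lambda=(\lambda_1,\ldots,\lambda_\ell)$, let $f_i$ denote the value of $x$ at the first element of the $i$th cycle, and set $B(\lambda)\coloneqq\sum_i\binom{\lambda_i}{2}$. The cycle equations $x_i=y+x_{\pi_i}$ force $\lambda_iy\equiv0\pmod b$ for every $i$, so $y=jb/d$ with $j\in\{0,\ldots,d-1\}$, while the sum constraint becomes
\begin{align*}
\sum_{i=1}^\ell \lambda_i f_i\;\equiv\;1+j\tfrac{b}{d}B(\lambda)\pmod b.
\end{align*}
Lemma~\ref{lem:modular_lemma} (with modulus $b$) counts solutions in $\Z_b^\ell$ as either $db^{\ell-1}$ or $0$; quotienting by the simultaneous translation $(f_i)\mapsto(f_i+t)$, which picks out a unique Pollak representative in $\epf{a,b,1}$, removes one factor of $b$. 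As in Theorem~\ref{thm:main_1}, the parity identity $2B(\lambda)/d\equiv(a+1)/d=kb/d\pmod 2$ is decisive, reducing the count to three cases: (i) $d=1$ yields $b^{\ell-2}$; (ii) $d>1$ odd yields $0$ since $d\nmid 1$; (iii) $d$ even requires $kb/d$ to be odd, and the subsequent congruence $d\mid 1+b/2$ forces $d=2$ together with $b\equiv2\pmod4$ and $k$ odd, contributing $2b^{\ell-2}$.

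For the orbit count, Burnside's lemma gives
\begin{align*}
o_{a,b,1}\;=\;A_1\;+\;2A_2\cdot[b\equiv2\!\!\pmod4,\ k\text{ odd}],\qquad
A_d\;\coloneqq\!\!\sum_{\substack{\lambda\vdash a+1\\ \GCD(\lambda,b)=d}}\!\!\frac{b^{\ell(\lambda)-2}}{z_\lambda}.
\end{align*}
Rescaling parts $\lambda_i\mapsto\lambda_i/d$ converts $A_d$ into $\tfrac{1}{d^2}$ times a sum over partitions $\mu\vdash ke$ with $\GCD(\mu,e)=1$, where $e=b/d$. Möbius inversion on divisors of $e$ combined with the identity
\begin{align*}
\sum_{\mu\vdash m}\frac{y^{\ell(\mu)}}{z_\mu}\;=\;[x^m](1-x)^{-y}\;=\;\binom{m+y-1}{m}
\end{align*}
(specialised at $y=e/f$ for each $f\mid e$) yields $A_d=\tfrac{1}{b^2}\sum_{d'\mid e}\mu(e/d')\binom{(k+1)d'-1}{kd'}$. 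Assembling $A_1+2A_2$ in the $b\equiv2\pmod4$, $k$ odd regime, and matching with $A_1$ alone in the other regimes, collapses to the uniform signed expression via the factor $(-1)^{k(b+d)}$.

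The main obstacle is the even-$d$ case of the character, where one must verify that the congruence $d\mid 1+b/2$ (under the constraints that $d$ is even and $b/d$ is odd) collapses to $d=2$ by reducing modulo $d/2$. A secondary bookkeeping step in the orbit formula is matching the two-term expression $A_1+2A_2$ with the unified signed sum: this rests on the identity $\mu(b/d)=-\mu((b/2)/d)$ for odd divisors $d$ of $b/2$ when $b\equiv2\pmod4$, together with the observation that in the remaining regimes the Möbius terms either vanish or have $d$ of a parity that renders the sign $(-1)^{k(b+d)}$ trivial.
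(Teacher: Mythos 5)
Your proposal is correct and is essentially the proof the paper intends: the paper explicitly omits the argument (``Rather than repeating the analysis from before, we simply state our result''), and your write-up is precisely that repetition, with the modulus $n$ replaced by $b$, the refined $d=\GCD(\lambda_1,\ldots,\lambda_\ell,b)$ tracked through Lemma~\ref{lem:modular_lemma}, the parity identity $2B(\lambda)/d\equiv kb/d\pmod 2$ playing the same decisive role, and the orbit count assembled exactly as in Theorems~\ref{thm:orbits_c_0} and~\ref{thm2}. All the delicate points you flag (the reduction of $d\mid 1+b/2$ modulo $d/2$ to force $d=2$ and $b\equiv 2\pmod 4$ with $k$ odd, and the sign bookkeeping $\mu(b/d)=-\mu((b/2)/d)$ for odd $d$) check out.
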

% \begin{proof}
% 	The proof is very similar to the proof of Theorem \ref{thm:main} and we only sketch it. Assume that $\pi = (1, \ldots, \lambda_1)(\lambda_1+1, \ldots, \lambda_1+\lambda_2) \cdots$, and let $d \coloneqq \GCD(\lambda,b)$.
% 	Suppose that $\pi \cdot (x_1,\ldots,x_{a+1}) = (x_1,\ldots,x_{a+1})$.
% 	It follows that $x_1+\cdots + x_{a+1}$ is divisible by $d$, and that $x_1+\cdots + x_{a+1} = \frac{(kb-2)(b-1)}{2} \: (\md d)$, so $d | \frac{(kb-2)(b-1)}{2}$.\\
% 	If $b/d$ is even or if both $b/d$ and $d$ are odd, then, modulo $d$, $0 = 1$, with an analogous proof. It follows that $d = 1$. If $b/d$ is odd and $d$ is even, then, modulo $d$,
% 	$$0 = \frac{(kb-2)(b-1)}{2} = \left( d \cdot \frac b d - 1\right)\left( d \cdot k \cdot \frac{\frac b d - 1} 2 + \frac {dk} 2 - 1\right) =  1 - \frac {dk} 2.$$
% 	If $k$ is even, then $0 = 1 - \frac {dk} 2 = 1 \: (\md d)$ and $d = 1$, and if $k$ is odd, then $0 = 1 - \frac {dk} 2 = 1 - \frac d 2 \: (\md d)$ and $d = 2$.\\
% 	Counting the number of fixed points under the action of $\pi$ when $\GCD(\lambda,b) = 1$ or when $\GCD(\lambda,b) = 2$, $b = 2 \: (\md 4)$, $k$ is odd is again an application of Lemma \ref{lem:modular_lemma}.
% \end{proof}

% Once again, we may compute the number of orbits by mimicking the argument in the proof of Theorem~\ref{thm2}.
%
% The proof is almost exactly the same for odd $k$ as for the $k = 1$ case, and for even $k$, it is even easier (as we just have to evaluate $\sum_{\lambda \vdash kb} b^{\ell(\lambda)}/z_\lambda$).

\section{Final remarks}
We remark briefly on a plausible approach to establishing Conjecture~\ref{conj1}.
One way to prove the conjecture would be to find an explicit action-preserving map between $\epf{n,1}$ and a particular basis of the space $V_n$.
The following table shows the construction (for a representative of each orbit) for $n = 3,4,5$.
Consider the case $n=3$ for instance. By its definition, $V_3$ would be spanned by elements of  $\{1,x_1-x_2,x_2-x_3,x_1-x_3\}$, and one can extract a basis from this, say $\{1,x_1-x_2,x_2-x_3\}$.
In fact, one can read from the table the following $S_3$-invariant basis of $V_3$:
$$\{1-2x_1+x_2+x_3,1-2x_2+x_1+x_3,1-2x_3+x_1+x_2\}.$$
The map
$$100 \mapsto 1-2x_1+x_2+x_3, \quad 010 \mapsto 1-2x_2+x_1+x_3, \quad 001 \mapsto 1-2x_3+x_1+x_2$$
commutes with the action.
We were not able to find an appropriate basis for $n \geq 6$, but we did check the conjecture (via character computations) for $n = 6$ as well.
Note further that $V_n$ is naturally graded by the number of edges of a slim graph.
We do not see a compatible grading in our $\epf{n}$.
$$\begin{array}{|c|c|c|} \hline
3 & 001 & 1-2x_3+x_1+x_2 \\ \hline
4 & 0003 & 1-3x_4+x_1+x_2+x_3 \\
  & 0012 & (x_4-x_1)(x_4-x_2)(1-2x_3+x_1+x_2) \\ \hline
5 & 00001 & 1-4x_5+x_1+x_2+x_3+x_4 \\
  & 00033 & (-3x_4+x_1+x_2+x_3)(-3x_5+x_1+x_2+x_3) \\
  & 01113 & (x_1-x_2)(x_1-x_3)(x_1-x_4) \\
  & 00114 & (x_5-x_3)(x_5-x_4)(-2x_3+x_1+x_2)(-2x_4+x_1+x_2) \\
  & 00123 & (x_5-x_1)(x_5-x_2)(x_5-x_3)(x_4-x_1)(x_4-x_2)(1-2x_3+x_1+x_2) \\ \hline
\end{array}$$

\medskip

Observe also the results of Berget and Rhoades are in a slightly more general setting\textemdash{}they consider spaces obtained by spans of polynomials attached to slim graphs of \emph{multigraphs} $K_{n}^{\ell,m}$ and study the $S_n$-action.
In this more general setup, usual parking functions are replaced by certain vector parking functions \cite{Yan15}.
 We emphasize that the generalization we consider in Section~\ref{sec:generalization} is different from the above-mentioned, even though rational parking functions are also vector parking functions.
Interestingly, usual parking functions are the only ones at the intersection of these two pictures.
\emph{Since Berget and Rhoades used an $S_n$-module coming from work of Postnikov and Shapiro, one is led to wonder if  there is an $(a,b)$-analogue in that context?}

\medskip

Our last remark concerns the number of orbits $o_{n,1}$ (respectively $o_{a,b,1}$) under the action $\tau_{n,1}$ (respectively $\tau_{a,b,1}$).
According to \cite[A131868]{oeis}, $no_{n,1}$ is equal to the number of $n$-element subsets of $\{1,\dots,2n-1\}$ that sum to $1$ modulo $n$.
We do not know how to establish this correspondence directly.
The numbers $o_{a,b,1}$ show up in a topological setting as Betti numbers as described in \cite[Section 5]{Ray18}.
Again the counting problem considered in the aforementioned article is different from ours.
We intend to explore some of these connections further.

\section*{Acknowledgements}

This material is based upon work supported by the Swedish Research
Council under
grant no. 2016-06596 while the authors were in residence at Institut
Mittag-Leffler in Djursholm, Sweden during Spring 2020.
We would like to thank the institute for its hospitality during our stay.
We would also like to thank Christos Athanasiadis, Darij Grinberg, Igor Pak, Jongwon Kim,  Marino Romero, and Robin Sulzgruber for helpful conversations and pointers to references.

%\medskip
%
%There seems to be another, slightly more elegant set we can act on instead of on $X_n$. Define $Y_n$ as the set of all integer sequences $(y_1,\ldots,y_n)$ satisfying $y_i \geq 0$ and  $\sum_{i=1}^n y_i = \binom{n-1} 2$ for which the non-increasing rearrangement $(z_1,\ldots,z_n)$ is dominated by $(n-2,n-3,\ldots,1,0,0)$; in other words, for which $z_1+\cdots+z_k \leq (n-2) + (n-3) + \cdots + (n-k-1)$ holds for $k = 1,\ldots,n-1$. For example,
%
%\begin{align*}
%Y_3 = & \{001,010,100\}, \\
%Y_4 = &\{0111, 1011, 1101, 1110, 0012, 0021, 0102, 0121,\\
%& 0201, 0210, 1002, 1020, 1200, 2001, 2010, 2100 \},
%\end{align*}
%and the weakly increasing elements of $Y_5$ are $11112$, $00222$, $01113$, $01122$, and $00123$.
%
%\medskip
%
%There is a natural action $\nu_n$ of $S_n$ on $Y_n$ defined by $\pi \cdot (y_1,\ldots,y_{n}) = (y_{\pi_1},\ldots,y_{\pi_{n}})$.
%
%\medskip
%
%It seems that for every $(x_1,\ldots,x_n) \in X_n$, there is a unique $y$ so that $(y+x_1,\ldots,y+x_n) \in Y_n$, with all computations modulo $n$. This would give a bijection $X_n \to Y_n$, and prove the following.
%
%\begin{conjecture}
% The action $\nu_n$ is isomorphic to $\tau_n$ (and $\rho_n$).
%\end{conjecture}
\bibliographystyle{alpha}
\bibliography{Biblio_PS}

\end{document}